\DeclareFontFamily{OML}{rsfs}{\skewchar\font'177}
\DeclareFontShape{OML}{rsfs}{m}{n}{ <5> <6> rsfs5 <7> <8> <9> rsfs7
  <10> <10.95> <12> <14.4> <17.28> <20.74> <24.88> rsfs10 }{}
\DeclareMathAlphabet{\mathfs}{OML}{rsfs}{m}{n}
\newtheorem{theorem}{Theorem}
\newtheorem{lemma}[theorem]{Lemma}
\newtheorem{proposition}[theorem]{Proposition}
\newtheorem{corollary}[theorem]{Corollary}
\theoremstyle{definition}
\newtheorem{definition}[theorem]{Definition}
\newtheorem*{teoBBB}{Theorem B} 
\newtheorem*{theoremm}{Theorem}
\newtheorem{example}{Example}
\newtheorem{theoremletra}{\textbf{Theorem}}
\renewcommand{\thetheoremletra}{\Alph{theoremletra}}
\theoremstyle{remark}
\numberwithin{equation}{section}
\numberwithin{theorem}{section}
\newcommand{\intav}[1]{\mathchoice {\mathop{\vrule width 6pt height 3 pt depth  -2.5pt
\kern -8pt \intop}\nolimits_{\kern -6pt#1}} {\mathop{\vrule width
5pt height 3  pt depth -2.6pt \kern -6pt \intop}\nolimits_{#1}}
{\mathop{\vrule width 5pt height 3 pt depth -2.6pt \kern -6pt
\intop}\nolimits_{#1}} {\mathop{\vrule width 5pt height 3 pt depth
-2.6pt \kern -6pt \intop}\nolimits_{#1}}}
\newcommand{\intavl}[1]{\mathchoice {\mathop{\vrule width 6pt height 3 pt depth  -2.5pt
\kern -8pt \intop}\limits_{\kern -6pt#1}} {\mathop{\vrule width 5pt
height 3  pt depth -2.6pt \kern -6pt \intop}\nolimits_{#1}}
{\mathop{\vrule width 5pt height 3 pt depth -2.6pt \kern -6pt
\intop}\nolimits_{#1}} {\mathop{\vrule width 5pt height 3 pt depth
-2.6pt \kern -6pt \intop}\nolimits_{#1}}}
\newcommand{\cc}{\mathscr{C}}
\newcommand{\N}{\mathbb{N}}
\newcommand{\li}{\mathscr{L}}
\newcommand{\pp}{\mathscr{P}}
\begin{document}


\title[Kneading sequences for toy models of H\'enon maps]{Kneading sequences for toy \\ models of H\'enon maps}

\author{Ermerson Araujo}
\date{\today}
\keywords{Combinatorial equivalence, kneading sequences, unimodal maps}
\subjclass[2010]{37B10, 37E05}

\address{Ermerson Araujo, Departamento de Matem\'atica, Centro de Ci\^encias, Campus do Pici,
Universidade Federal do Cear\'a (UFC), Fortaleza -- CE, CEP 60440-900, Brasil}
\email{ermersonaraujo@gmail.com}

\renewcommand{\thetheoremletra}{\Alph{theoremletra}}

\begin{abstract}

The purpose of this article is to study the relation between combinatorial equivalence and 
topological conjugacy, specifically how a certain type of combinatorial equivalence implies 
topological conjugacy. We introduce the concept of kneading 
sequences for a setting that is more general than one-dimensional dynamics:
for the two-dimensional toy model family of  H\'enon maps introduced by Benedicks and Carleson, 
we define kneading sequences for their critical lines, and 
prove that these sequences are a complete invariant
for a natural conjugacy class among the toy model family.
We also establish a version of Singer's Theorem for the toy model family.
\end{abstract}

\maketitle


\section{Introduction}

One of the main questions in dynamical systems is to identify when two
systems are `the same', where the term `the same' means 
some type of equivalence between the systems. 
One of the simplest notions of equivalence is {\it topological conjugacy}.
Let $X$ and $Y$ be topological spaces, and let $f:X\to X$ and $g:Y\to Y$ be 
continuous maps. We say that $f,g$ are topologically 
conjugate (or simply {\it conjugate}) if there is a 
homeomorphism $h:X\to Y$ satisfying the conjugacy equation $h\circ f=g\circ h$. 

Topological conjugacy heavily depends on class of maps. For orientation preserving
circle homeomorphisms, Poincar\'e introduced in the late XIX century the notion of {\it rotation numbers},
which essentially characterises all conjugacy classes: if $f:\mathbb{S}^1\to\mathbb{S}^1$
is an orientation preserving 
homeomorphism with irrational rotation number $\rho$, then $f$ is topologically semiconjugate to
the rigid rotation $R_\rho(x)=x+\rho$.
Furthermore, if $f$ contains a dense orbit then $f$ is indeed topologically conjugate to $R_\rho$.

For interval homeomorphisms, topological conjugacy is 
trivial, due to the following fact: every orbit is either periodic 
or converges to a periodic orbit. Now, if we consider interval 
{\em endomorphisms}, then the situation is much richer.
For this class, Milnor and Thurston developed a theory that substitutes the theory of rotation
numbers of Poincar\'e \cite{MT}. Nowadays known as {\it kneading theory}, it 
plays an important role in one-dimensional dynamics. It allows, for example,
to prove that topological entropy varies continuously in the the quadratic family, see \cite{dMvS} for details.
The setting considered by Milnor and Thurston is the following: let $I$ be an interval,
and let $f:I\to I$ be a map with finitely many turning points (i.e. a point $x\in I$ where
$f$ changes monotonicity). For each turning point, they defined a sequence, called {\it kneading sequence}, 
that encodes the trajectory of this point. Then they proved that if two interval maps do not have wandering intervals,
periodic attractors neither intervals of periodic points, then they are topologically conjugate 
if and only if their kneading sequences are the same. 

When we pass to two-dimensional maps, the situation becomes much more complicated.
One of the main maps considered is $H_{a,b}:\mathbb{R}^2\to\mathbb{R}^2$ given by
$H_{a,b}(x,y)=(a-x^2-by,x)$, $a,b\in\mathbb{R}$, known as {\it H\'enon map}.
Over the last years, there has been various attempts to construct a similar theory in this context,
but none of them is yet completely satisfactory. The difficulty comes from two facts: the plane does not possess
a natural order as the interval, and there is no sufficiently nice dynamical notion of critical point. 

In this work we introduce kneading sequences for a particular two-dimensional family.
This family was considered by Benedicks and Carleson as a toy model for the study of H\'enon maps \cite{BC}.
Each of these maps acts on a two-dimensional rectangle via the expression $F(x,y)=(f(x,y), K(x,y))$,
where $f$ is a family of unimodal maps and $K$ is an inverse branch of a Cantor map. We call 
each map of this form a {\em toy model}.
Matheus, Moreira and Pujals proved that, among the toy models, Smale's Axiom A property is 
$C^1$-dense and, on the $C^2$-topology, there exists an open subset satisfying a
Newhouse-like phenomenon \cite{MMP}. These properties indicate that the toy models exhibit 
rich dynamical properties. It is for them that we characterize topological conjugacy in terms
of kneading sequences.

\begin{theoremletra}\label{TheA}
Let $F,G$ be two toy models, and assume that
neither of them has wandering intervals, interval of periodic points nor
weakly attracting periodic points. Then $F,G$ are fiber topologically conjugate 
if and only if they have the same kneading sequences.
\end{theoremletra}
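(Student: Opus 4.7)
My plan is to follow the structure of the classical Milnor--Thurston kneading theorem for unimodal maps, lifted to the fibered two-dimensional setting of toy models.

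\textbf{Forward direction.} Suppose $h$ is a fiber topological conjugacy from $F$ to $G$. Since $h$ respects the fiber decomposition, it sends each unimodal slice of $F$ homeomorphically onto the corresponding slice of $G$. Because the fiberwise turning set is dynamically characterised, the critical lines of $F$ are mapped onto those of $G$. Intertwining with the dynamics, the images under $h$ of the iterates of the critical lines of $F$ are the iterates of the critical lines of $G$, so the fiberwise itinerary of each iterate with respect to the critical lines is preserved. Hence the kneading sequences of $F$ and $G$ agree.

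\textbf{Converse direction.} For the converse I would proceed in three steps. First, from the unimodal structure of each fiber $f(\cdot,y)$ I extract a fiberwise order and, for any point, define its itinerary as the symbolic sequence coding on which side of the critical lines each forward iterate lies, together with the Cantor-base address supplied by iteration of $K$. Matching kneading sequences means, tautologically, that the itineraries of the critical lines themselves coincide. Second, I define a candidate conjugacy $h$ on the union of all forward and backward orbits of the critical lines by sending an orbit point of $F$ carrying a given finite symbolic label to the unique orbit point of $G$ carrying the same label; matching kneading data combined with fiberwise monotonicity of $f$ make this assignment well-defined, equivariant, fiberwise order-preserving, and compatible with the base Cantor dynamics of $K$. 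Third, I extend $h$ by continuity from this dense skeleton to the whole rectangle.

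\textbf{Main obstacle.} The hard step is the extension. Once $h$ is built on the dense skeleton of orbits of critical lines, each fiber decomposes into this skeleton plus countably many complementary open arcs, and arcs of $F$ must be matched bijectively with arcs of $G$. Here the three hypotheses of the theorem become essential in a way parallel to the one-dimensional picture: the absence of wandering intervals prevents a non-degenerate arc on one side from being collapsed to a point on the other; the absence of intervals of periodic points rules out pathological matches between arcs with different periodic behaviour; and the absence of weakly attracting periodic points, combined with the Singer-type statement for toy models promised in the abstract, controls the local dynamics near non-hyperbolic periodic orbits. The real technical difficulty is to run these one-dimensional arguments \emph{uniformly} along the family of fibers indexed by the Cantor-like base, so that continuity of $h$ in the transverse $y$-direction follows from the contraction built into $K$. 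This vertical continuity has no one-dimensional counterpart and is, I expect, the main novelty and the principal obstacle of the proof; once it is in hand, bijectivity plus compactness promote $h$ to a homeomorphism and the conjugacy equation holds by construction.
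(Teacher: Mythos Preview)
Your overall strategy matches the paper's: the forward direction is Proposition~\ref{pprop}, and for the converse you build a conjugacy on the preimage set $\mathscr{C}(F)$ of the critical lines (the paper's Theorem~\ref{maintheorem}, constructed inductively fiber by fiber) and then extend. However, two points in your execution of the extension step are off.

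First, the role of the three hypotheses is not to govern a matching of complementary arcs as you describe, nor does the Singer-type Theorem~B enter at all---it is an independent result. The hypotheses are used only to guarantee that $\mathscr{C}^F(y)$ and $\mathscr{C}^G(\psi(y))$ are \emph{dense} in each fiber $I(y)$. Once density holds, the fiberwise extension of the strictly increasing map $H(y)$ is simply by monotone limits; there is no arc-matching problem because the complement of the skeleton in each fiber has empty interior.

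Second, and more seriously, you correctly flag transverse continuity as the main obstacle but attribute it to ``the contraction built into $K$''. That is not the mechanism, and contraction of $K$ gives you no control over how $H$ varies from fiber to fiber. The paper's device is Lemma~\ref{lemmathe2}: for each $(x,y)\in\mathscr{C}(F)$ one constructs a continuous curve $\gamma^F_{j_1\cdots j_n}:[0,1]\to\mathscr{C}(F)$ of the form $w\mapsto(\widetilde\gamma(w),w)$ passing through $(x,y)$, built from compositions of the inverse branches $f_j^{-1}(y)$. What makes these curves continuous in $y$ is Lemma~\ref{lemmathe} and Corollary~\ref{corothem}, namely the continuous dependence of $f_j^{-1}(y)$ on $y$---a property of the unimodal family, not of $K$. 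Continuity of $H$ at a general point $(x,y)$ is then obtained by trapping $(x,y)$ between two such curves on the $F$-side, observing that $H$ sends them to the corresponding curves on the $G$-side, and using fiberwise monotonicity to conclude that the region between is mapped into a small neighbourhood. Without this curve construction you have no tool to compare $H$ across nearby fibers, and the proof does not close.
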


The hardest part in Theorem \ref{TheA} is the reverse implication, that
same kneading sequences lead to fiber topological conjugacy. To construct the conjugacy,
we employ ideas similar to those used in 
one-dimensional dynamics: we define it in preimages of turning points and
then extend to the closure. This approach also leads to the same result for more general toy models,
see Theorem \ref{theorem-generalized}.

Theorem \ref{TheA} has an underlying motivation that we now explain.
For dissipative surface diffeomorphisms, Crovisier and Pujals introduced 
a new class of diffeomorphisms, called {\em strongly dissipative}, whose 
dynamical properties behave as an intermediate level between one-dimensional dynamics
and two-dimensional dynamics \cite{PC}. They proved that this class is open and non-empty (it contains 
H\'enon maps with Jacobian in $(-1/4, 1/4)$), and that every strongly dissipative diffeomorphism of the disc 
(in particular H\'enon maps) is equivalent to a one-dimensional structure (it is semiconjugate to an 
endomorphism defined on a tree). If two strongly dissipative diffeomorphisms on the disc are topologically
conjugate, then their one-dimensional structures are also conjugate. It is unknown
if this one-dimensional structure carries all relevant dynamical information, i.e., if the 
one-dimensional dynamics are conjugate, are the strongly dissipative diffeomorphisms also conjugate?
Theorem \ref{TheA} shows that, on the
presence of a good definition of turning points, we can recover information about the conjugacy class
of the toy model by means of an appropriate combinatorial structure. We hope that the methods
introduced in the proof of Theorem \ref{TheA} will be used to study the conjugacy 
classes of strongly dissipative diffeomorphisms of the disc. 

It is worth mentioning that kneading sequences are not the only method to characterize
topological conjugacy. Based on numerical experiments \cite{cvi}, Cvitanovi\'c introduced 
in \cite{civ2} the concept of {\it pruning fronts} and conjectured that every map 
$H_{a,b}$ in the H\'enon family 
can be understood as a {\it pruned horseshoe}: 
if $F:\mathbb{R}^2\to\mathbb{R}^2$ is Smale's horseshoe map, 
then after pruning (destroying) some orbits of $F$ we construct 
a new map $\widetilde{F}$ that is equivalent, in some sense, to $H_{a,b}$. 
This is known as the Pruning Front Conjecture. 
Mendonza proved that the Pruning Front Conjecture holds in an open set of
the parameter space \cite{men1}, and Ishii proved it for the Lozi family \cite{ishii1}.
For more detail on the theory of pruning fronts and its relationship with kneading theory,
see \cite {carvalho2,carvalho3}. 

On a different direction of kneading sequences, we also establish for the toy models
a version of the classical Singer theorem. Let us explain this. Singer proved that, 
for $C^3$ interval maps with negative Schwarzian derivative, the basin of any
attracting periodic point contains either a critical point or a boundary point of the interval.
Using that the toy models do preserve horizontal lines, 
we prove the following theorem, which is a version of Singer's theorem for the toy models.

\begin{theoremletra}\label{thesinger222}
Let $F(x,y)=(f(x,y), K(x,y))$ be a toy model.
If each interval map $f(\cdot,y)$ has negative Schwarzian 
derivative, then the closure of the immediate 
basin of any strongly attracting periodic orbit contains either a 
point of the critical line or a boundary point of the rectangle.
\end{theoremletra}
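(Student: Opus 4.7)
The plan is to reduce the statement to the classical one-dimensional Singer theorem, exploiting the fact that a toy model preserves the horizontal foliation: since the second coordinate $K(x,y)$ of $F$ depends only on $y$, each horizontal line $\{y=y_0\}$ is sent by $F$ onto the horizontal line $\{y=K(y_0)\}$. Writing the orbit of a strongly attracting periodic point $p=(x_0,y_0)$ of period $n$ as $(x_i,y_i)=F^i(p)$, the iterate $F^n$ sends $\ell_0=\{y=y_0\}$ into itself, and its restriction produces the one-dimensional first-return map
\[
\phi = f(\cdot,y_{n-1})\circ f(\cdot,y_{n-2})\circ\cdots\circ f(\cdot,y_0)
\]
on the horizontal slice $I_0$ of the rectangle. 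Because negative Schwarzian derivative is preserved under composition, $\phi$ is a (possibly multimodal) interval map with negative Schwarzian derivative, for which $x_0$ is a fixed point.

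The hypothesis \emph{strongly attracting} should be chosen precisely so that $|\phi'(x_0)|<1$, i.e., $x_0$ is a hyperbolic attracting fixed point of $\phi$. Granting this, the classical Singer theorem for interval maps with negative Schwarzian derivative implies that the closure of the one-dimensional immediate basin $U$ of $x_0$ under $\phi$ contains either a turning point of $\phi$ or an endpoint of $I_0$. In the second case the point is already a boundary point of the rectangle. In the first case, a turning point $x^{\ast}\in\overline{U}$ satisfies, by the chain rule, that $F^k(x^{\ast},y_0)$ lies on the critical line for some $0\le k<n$.

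To transfer this information to the two-dimensional immediate basin $B^{\ast}(p)$, let $W$ be the connected component of $x_0$ in $B^{\ast}(p)\cap\ell_0$. The inclusion $W\subset U$ follows from continuity and $F^n$-invariance of $B^{\ast}(p)$: these give $F^n(W)\subset W$, and the definition of $B^{\ast}(p)$ ensures $\phi^m(x)\to x_0$ for every $x\in W$. Conversely, $U\times\{y_0\}$ is a connected subset of the full basin of $p$ containing $p$, so it sits inside the connected component $B^{\ast}(p)$ and, being in $\ell_0$, is contained in $W$. Hence $W=U$, and the point produced by the one-dimensional Singer theorem lies in $\overline{W}\subset\overline{B^{\ast}(p)}$; its $F^k$-image is then on the critical line and in $\overline{B^{\ast}(F^k(p))}$, which is contained in the closure of the immediate basin of the orbit.

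The step I expect to be most delicate is unpacking the precise definition of \emph{strongly attracting} so as to conclude $|\phi'(x_0)|<1$, which is what licenses the application of the hyperbolic form of Singer's theorem; a secondary technical issue is verifying that the composition formula for the Schwarzian derivative remains valid at the turning points of the intermediate factors, so that the classical theorem applies directly to $\phi$ in its standard statement.
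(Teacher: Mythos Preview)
Your overall strategy---restrict to the horizontal fiber $I(q)$ through the periodic point $(p,q)$, note that $F^m$ acts there as a composition $g=f_{j_m}(y_{j_{m-1}\cdots j_1})\circ\cdots\circ f_{j_1}(q)$ of negative-Schwarzian unimodal maps, and then invoke a one-dimensional Singer-type argument---is exactly the paper's approach. The difference is in how the one-dimensional step is executed, and this is precisely where your proposal has a gap.

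You speculate that ``strongly attracting'' should mean $|\phi'(x_0)|<1$, so that the hyperbolic form of Singer applies. In the paper this is \emph{not} the definition: $(p,q)$ is called strongly attracting when its basin $\mathscr B(p,q)$ merely contains an open subset of $\mathrm{Dom}(F)$. Nothing in that hypothesis gives you control on $\phi'(x_0)$, so your black-box application of Singer is not yet justified. The paper avoids this entirely by running the Minimum Principle argument directly, and by working at the \emph{endpoints} of the fiber slice rather than at $x_0$. Concretely: argue by contradiction, assuming the closure of the immediate basin misses both the critical line and $\{\pm 1\}\times[0,1]$. Let $[(a,b),q]$ be the connected component of the immediate basin intersected with $I(q)$ that contains $(p,q)$. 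One checks $g(\{a,b\})\subset\{a,b\}$ (otherwise a small neighbourhood of $(a,q)$ or $(b,q)$ would fall into the immediate basin, contradicting maximality). Passing to $g^2$ if necessary, $a$ and $b$ are fixed; being on the boundary of the basin they are not attracting, so $|g'(a)|,|g'(b)|\ge 1$. The Minimum Principle then forces $|g'|>1$ on all of $(a,b)$, contradicting $g((a,b))\subset(a,b)$. No information about $\phi'(x_0)$ is ever used.

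Your secondary worry about the Schwarzian chain rule at turning points also dissolves in this contradiction setup: since the closure of the immediate basin is assumed to miss $\li_c(F)$, none of the iterates of $[(a,b),q]$ meets the critical line, so $g$ has no critical points on $(a,b)$ and the composition formula applies on the nose. Finally, your argument that $W=U$ and the transfer step via $F^k$ are correct in spirit but become unnecessary once you argue by contradiction as above: the contradiction already occurs inside the single fiber $I(q)$.
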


This paper is organized as follows. Section \ref{section-toy-model} introduces in full details the 
toy models and consider their kneading sequences. It also states the main results, establishes 
some basic properties of the toy models, and discusses examples.
Section \ref{section-proofs} contains the proofs of the main results stated in Section \ref{section-toy-model}.

\vspace{.2cm} 

\medskip
\noindent
{\bf Acknowledgements.} This article is part of my Ph.D thesis at IMPA
under the supervision of Enrique Pujals. I deeply thank him for his constant
support, patience, encouragement and availability. I also would like to thank
Yuri Lima for the carefully reading and several helpful comments. This
work was supported by CNPq and CAPES.


\section{Toy models and kneading sequences}\label{section-toy-model}

Results in one-dimensional dynamics heavily rely on the total order of the line.
For example, given two multimodal interval maps, the preimages
of the turning points divide the interval from left to right, and if these preimages
are combinatorially equivalent then there is a monotone bijection
between them. The family of two-dimensional maps we consider also inherits this property:
as we will see below, the one-dimensional order structure
is preserved along ``unstable leaves''.


\subsection{The Toy Model}

Given a continuous interval map $f:[a,b]\to [a,b]$, we call it
a {\it unimodal map} if $f(a)=f(b)=a$ and if there exists a
unique point $c\neq a,b$, called {\em turning point}, for which $f$ is
increasing on $[a,c]$ and decreasing on $[c,b]$. We consider two-dimensional maps 
with fiber dynamics given by unimodal maps. We follow \cite{MMP} for the description.

\medskip
\noindent
{\sc One-Parameter Family of Unimodal Maps:} Consider a one-parameter family
\[
f(y):[-1,1]\to[-1,1],\ \ y\in[0, 1],
\]
such that:
\begin{enumerate}[$\circ$]
\item $y\mapsto f(y)$ is continuous,
\item each $f(y)$ is a unimodal map with $f(y)(-1)=f(y)(1)=-1$ and $0$ as a turning point.
\end{enumerate}

\vspace{.2cm}
Fix $a<b$ in $(0,1)$.

\medskip
\noindent
{\sc Cantor Map:} A {\em Cantor map} is a differentiable map $k:[0,a]\cup[b,1]\to[0,1]$ such that
$k(0)=k(1)=0$, $k(a)=k(b)=1$ and $\vert k'\vert>\gamma>1$.
Each Cantor map defines a {\em Cantor set} $\mathscr{K}^F$ given by
\[
\mathscr{K}^F=\displaystyle\bigcap_{n\geq0}k^{-n}([0,a]\cup[b,1]).
\]

Let $K_+=(k\restriction_{[0, a]})^{-1}$ and $K_-=(k\restriction_{[b, 1]})^{-1}$ be the inverse branches of $k$,
and define $K(x,y)$ by
$$
K(x,y)= \left\{
\begin{array}{rl}
K_+(y), & \textrm{if } x>0   \\
K_-(y), & \textrm{if } x<0.
\end{array}
\right.
$$

\medskip
\noindent
{\sc Toy Model:} A {\it toy model} is a map $F$ of the form
\[
\begin{array}{crcl}
F \ : & \! ([-1,1]\backslash\{0\})\times [0,1]& \! \longrightarrow  
& \! [-1,1]\times[0,1] \\
        & \! (x,y) & \! \longmapsto      & \! (f(y)(x), K_{sign(x)}(y)).
\end{array}
\]

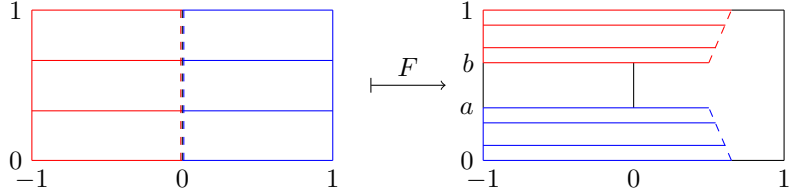
\begin{figure}[H]\centering
	\begin{tikzpicture}
	\draw[color=red] (0,0) -- (2,0) ;
	\draw[color=red] (0,0) -- (0,2) ;
	\draw[color=red] (0,2) -- (2,2) ;
	\draw[color=red] (0,1.33) -- (2,1.33) ;
	\draw[color=red] (0,0.66) -- (2,0.66) ;
	\draw[color=red, dashed] (1.98,2) -- (1.98,0) ;
	\draw[dashed] (2,2) -- (2,0) ;
	\draw[color=blue, dashed] (2.02,2) -- (2.02,0) ;
	\draw[color=blue] (2,0) -- (4,0) ;
	\draw[color=blue] (2,2) -- (4,2) ;
	\draw[color=blue] (4,2) -- (4,0) ;
	\draw[color=blue] (2,1.33) -- (4,1.33) ;
	\draw[color=blue] (2,0.66) -- (4,0.66) ;
	\fill[black] (5,1) circle (0.mm) node[above]{$F$};
	\fill[black] (2,0) circle (0.mm) node[below]{$0$};
	\fill[black] (0,0) circle (0.mm) node[below] {$-1$};
	\fill[black] (4,0) circle (0.mm) node[below] {$1$};
	\fill[black] (0,2) circle (0.mm) node[left] {$1$};
	\fill[black] (0,0) circle (0.mm) node[left] {$0$};
	\draw[|->] (4.5,1) -- (5.5,1) ;
	\draw[] (6,0.7) -- (6,1.3) ;
	\draw[] (9.3,0) -- (10,0) ;
	\draw[] (8,0.7) -- (8,1.3) ;
	\draw[] (9.3,2) -- (10,2) ;
	\draw[] (10,2) -- (10,0) ;
	\draw[color=red] (6,2) -- (9.3,2) ;
	\draw[color=red] (6,1.3) -- (6,2) ;
	\draw[color=red] (6,1.3) -- (9,1.3) ;
	\draw[color=red, dashed] (9,1.3) -- (9.3,2) ;
	\draw[color=red] (6,1.8) -- (9.22,1.8) ;
	\draw[color=red] (6,1.5) -- (9.09,1.5) ;
	\draw[color=blue] (6,0) -- (9.3,0) ;
	\draw[color=blue] (6,0.7) -- (9,0.7) ;
	\draw[color=blue] (6,0) -- (6,0.7) ;
	\draw[color=blue, dashed] (9.3,0) -- (9,0.7) ;
	\draw[color=blue] (6,0.2) -- (9.22,0.2) ;
	\draw[color=blue] (6,0.5) -- (9.09,0.5) ;
	\fill[black] (8,0) circle (0.mm) node[below]{$0$};
	\fill[black] (6,0) circle (0.mm) node[below] {$-1$};
	\fill[black] (10,0) circle (0.mm) node[below] {$1$};
	\fill[black] (6,2) circle (0.mm) node[left] {$1$};
	\fill[black] (6,0) circle (0.mm) node[left] {$0$};
	\fill[black] (6,0.7) circle (0.mm) node[left] {$a$};
	\fill[black] (6,1.3) circle (0.mm) node[left] {$b$};
\end{tikzpicture}
\caption{Depiction of the dynamics of a toy model $F$.}
\end{figure}

The map $F$ is not defined on the vertical line $\{0\}\times [-1,1]$, and this 
may introduce serious difficulties to analyse $F$ and apply compactness arguments.
To bypass this, we duplicate each $(0,y)$ to two points $(0^-,y)$ and $(0^+,y)$
and define $F$ on them by $F(0^\pm,y)=(f(y)(0),K_\pm(y))$.

\medskip
\noindent
{\sc Critical Line of $F$:} It is the set 
\[
\li_c(F)=\{(0^\pm, y):y\in[0,1]\}
\] 
equal to the disjoint union of two vertical segments $\{0^-\}\times [0,1]$ and $\{0^+\}\times [0,1]$.
Each element of $\li_c(F)$ is called a {\it turning point} of $F$.

Therefore $F$ is now defined on $\left(([-1,1]\backslash\{0\})\times[0,1]\right)\cup\li_c(F)$,
call this set $\text{Dom}(F)$. We define a natural topology on $\text{Dom}(F)$ as follows:
\begin{enumerate}[$\circ$]
\item If $(x,y)\in\text{Dom}(F)\backslash\li_c(F)$, then 
the neighborhoods of $(x,y)$ are the standard Euclidean neighborhoods.
\item If $x=0^-$ then the neighborhoods
of $(x,y)$ are $\mathcal U\cap ([-1,0^-]\times [0,1])$, where $\mathcal U$ is  a standard
Euclidean neighborhood of $(0,y)$.
\item If $x=0^+$ then the neighborhoods
of $(x,y)$ are $\mathcal U\cap ([0^+,1]\times [0,1])$, where $\mathcal U$ is  a standard
Euclidean neighborhood of $(0,y)$.
\end{enumerate}
We remark that, although $F$ is now defined on a compact set, it is {\em not} continuous.


\subsection{Notations and preliminaries}

For $y\in[0,1]$, write $f_-(y)$ for the restriction $f(y)\restriction_{[-1, 0^-]}$ and
$f_+(y)$ for the restriction $f(y)\restriction_{[0^+, 1]}$, see Figure \ref{f(y)}. 

\begin{figure}[H]\centering
\begin{tikzpicture}[scale=1.5]
      \draw[] (1,-1) -- (-1,-1) node[below] {$-1$};
	  \draw[] (1,1) -- (1,-1) node[below] {$1$};
	  \draw[] (1,1) -- (-1,1) node[left] {$1$};
	  \draw[] (-1,1) -- (-1,-1) node[left]{$-1$};
	  \draw[dashed] (1,0) -- (-1,0) node[left]{$0$};
      \draw[dashed] (0,1) -- (0,-1) node[below] {$0$};
      \draw[color=red] (0,0.7) parabola (-1,-1) ;
	    \draw[color=blue] (0,0.7) parabola (1,-1) ;
	\fill[red] (-0.7,0.4) circle (0.mm) node[above] {$f_-(y)$};
	\fill[blue] (0.7,0.4) circle (0.mm) node[above] {$f_+(y)$};
\end{tikzpicture}
\caption{The restriction maps $f_-(y)$ and $f_+(y)$.}
\label{f(y)}
\end{figure}
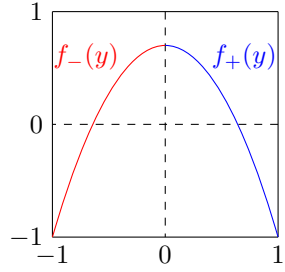
Also, let $F_-$ and $F_+$ be the restrictions of $F$ defined by
\[
\begin{array}{crcl}
F_- \ : & \! [-1,0^{-}]\times [0,1]& \! \longrightarrow  
& \! ([-1,0^{-}]\cup(0^{+},1])\times[0,1] \\
        & \! (x,y) & \! \longmapsto      & \! (f_-(y)(x), K_-(y))
\end{array}
\]
and
\newline
\[
\begin{array}{crcl}
F_+ \ : & \! [0^+,1]\times [0,1]& \! \longrightarrow  
& \! ([-1,0^{-})\cup[0^{+},1])\times[0,1] \\
        & \! (x,y) & \! \longmapsto      & \! (f_+(y)(x), K_+(y)).
\end{array}
\]
Every orbit of $F$ is therefore obtained by compositions of $F_-$ and $F_+$:
for each $(x,y)\in\text{Dom}(F)$ there is a sequence 
$j(x,y)=(j_1j_2\cdots j_m\cdots)\in\{-,+\}^\N$
such that
\[
F^{m+1}(x,y)=F_{j_{m+1}}(x_m,y_m),\ \ m\geq 0,
\]
where 
\[y_m=(K_{j_m}\circ\cdots\circ K_{j_1})(y)=:y_{j_m\cdots j_1} \text{ and }
x_m=f_{j_m}(y_{j_{m-1}\cdots j_1})\circ\cdots\circ
f_{j_2}(y_{j_1})\circ f_{j_1}(y)(x).
\]
Let $I(y):=([-1,0^-]\cup[0^+,1])\times\{y\}$, and
call $J\subset I(y)$ an {\it interval} if there are
$x_1,x_2\in[-1,0^-]$ or $x_1,x_2\in[0^+,1]$ such that
\[
J=\{x\in[-1,0^-]\cup[0^+,1]\;:\; x_1< x< x_2\}\times\{y\}.
\]
We will sometimes denote $J$ as above by $[(x_1,x_2),y]$. If 
\[
J=\{(x,y)\in[-1,0^-]\cup[0^+,1]\;:\; x_1\leq x\leq x_2\}\times\{y\},
\]
then we write $J=[[x_1,x_2],y]$. 
Given a periodic point $(p,q)$ of $F$, set
\[
\mathscr{B}(p,q)=\{(x,y): F^\ell(x,y)\to\mathscr{O}(p,q)\; \textrm{as}\;
\ell\to\infty\}. 
\]
We call $(p,q)$ a {\it weakly attracting periodic point} if
$\mathscr{B}(p,q)$ contains an interval. If $\mathscr{B}(p,q)$
contains an open subset, then call $(p,q)$ a
{\it strongly attracting periodic point}. 
When $\mathscr{O}_F(p,q)\cap\li_c(F)=\emptyset$, then
the immediate basin $\mathscr{B}_0(p,q)$ of $\mathscr{O}(p,q)$ is the union of the 
connected components of $\mathscr{B}(p,q)$ that contain of the set
from $\{(p,q), F(p,q),\ldots, F^{n-1}(p,q)\}$, 
where $n$ is the period of $(p,q)$. Hence $\mathscr{B}_0(p,q)\cap\li_c(F)=\emptyset$,
and so there is a sequence $j_1\cdots j_n\in\{-,+\}^n$
such that for each $(x,y)\in B(p,q)$ we have
\[
\left(f_{j_n}(y_{j_{n-1}\cdots j_1})\circ\cdots\circ
f_{j_2}(y_{j_1})\circ f_{j_1}(y)(x), 
y_{j_n\cdots j_1}\right)\in B(p,q),
\]
where $B(p,q)\subset\mathscr{B}_0(p,q)$ is the
component containing $(p,q)$.
Clearly, if $(p,q)$ is a strongly attracting 
periodic point, then it is also a weakly attracting periodic point. 

Given $y\in[0,1]$, an interval $J\subset I(y)$ is called an
{\it interval of periodic points} if every $(x,y)\in J$ is a
periodic point of $F$.

\medskip
\noindent 
{\sc Wandering Interval:} An interval $J\subset{\rm Dom}(F)$ is called
{\em wandering} if 
\begin{enumerate}[$\circ$]
\item $F^n(J)\cap F^m(J)=\emptyset$, $\forall n\neq m$;
\item $F^n(J)\cap\li_c(F)=\emptyset$, $\forall n\geq 0$.
\end{enumerate}

\vspace{.2cm}
A point $(x,y)$ is called {\em non-wandering} if
every neighborhood $V\subset\text{Dom}(F)$ containing $(x,y)$ 
has an iterate $\ell\geq1$ such that $F^\ell(V)\cap V\neq\emptyset$.
The {\it non-wandering set} $\Omega(F)$ is the set of all non-wandering points. 

It is well known that if $f:I\to I$ is an unimodal map without
intervals of periodic points nor attracting periodic points,
then the set of pre-images of the turning point is dense in $\Omega(F)$.
We establish the same for toy models. Define
\[
\cc(F)=\left\{(x,y)\in\text{Dom}(F):\exists\; \ell\geq0 
\;\textrm{such that}\; F^\ell(x,y)\in\li_c(F)\right\}.
\]

\begin{proposition}\label{nonwanprop}
Let $F$ be a toy model, and assume that $F$ has no weakly attracting periodic point nor intervals of
periodic points. Then $\Omega(F)\subset\overline{\cc(F)}$.
\end{proposition}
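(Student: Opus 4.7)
The plan is to argue by contradiction. Suppose $(x_0, y_0) \in \Omega(F) \setminus \overline{\cc(F)}$ and choose an open rectangular neighborhood $V = I_0 \times W_0$ of $(x_0, y_0)$ disjoint from $\cc(F)$; then no forward iterate of any point of $V$ meets $\li_c(F)$, so $F^n|_V$ is continuous for every $n \geq 0$. Shrinking $V$ if necessary, all points of $V$ share an arbitrarily long common prefix of itinerary in $\{-,+\}^\N$, and consequently $F^n|_V$ takes the skew-product form $(x,y)\mapsto(\Phi_n(x,y),\Psi_n(y))$, where $\Phi_n(\cdot, y)$ is a composition of monotone branches of unimodals (hence monotone in $x$ on the appropriate side of $\{0\}$) and $\Psi_n$ is a $\gamma^{-n}$-contraction built from the inverse Cantor branches $K_\pm$.

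The next step is to produce a periodic point of $F$ from the non-wandering hypothesis. Applying it to a shrinking sequence $V_m \downarrow (x_0, y_0)$, I obtain integers $n_m \geq 1$ and points $(x_m, y_m) \in V_m$ with $F^{n_m}(x_m, y_m) \in V_m$. If the $n_m$ stay bounded, continuity of a fixed $F^n$ on $V$ forces $F^n(x_0, y_0) = (x_0, y_0)$, so $(x_0, y_0)$ is itself periodic. Otherwise $n_m \to \infty$; because $\Psi_{n_m}$ is a $\gamma^{-n_m}$-contraction whose image of $W_0$ meets $W_0$, its unique fixed point $y^*_{n_m}$ lies in $\overline{W_0}$ and converges to $y_0$, while a one-dimensional monotone fixed-point argument applied to $\Phi_{n_m}(\cdot, y^*_{n_m})$ (using $\Phi_{n_m}(x_m, y_m) \to x_0 = \lim x_m$) produces a horizontal fixed point $p_m \to x_0$. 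In either subcase I obtain a periodic point $(p, q)$ of $F$ of some period $n$, whose orbit lies in the forward saturation of $V$ and therefore avoids $\li_c(F)$, and which either equals or accumulates on $(x_0, y_0)$.

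I then analyze the local dynamics of $F^n$ at $(p, q)$. Because $F^n|_V$ has the skew-product form $(\Phi_n, \Psi_n)$ with $\Psi_n$ contracting, the local behavior is governed by the horizontal multiplier $\lambda := \partial_x \Phi_n(p, q)$. If $|\lambda| < 1$, or $|\lambda| = 1$ with one-sided attraction, then $(p, q)$ attracts an open set horizontally while the vertical contracts automatically, making $(p, q)$ weakly attracting—contradicting the hypothesis. If $\Phi_n(\cdot, q) \equiv \mathrm{id}$ on a horizontal neighborhood of $p$, there is an interval of periodic points through $(p, q)$—again a contradiction. The only remaining possibility is $|\lambda| > 1$, i.e., $(p, q)$ is a horizontal saddle with vertical stable and horizontal unstable manifolds.

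In the saddle case, forward iterates of any small horizontal subinterval through $(p, q)$ expand monotonically in length under $\Phi_{kn}$ as $k \to \infty$; since each iterate remains a connected interval on one side of $\{0\}$ inside $[-1,1]$, the expansion cannot persist without some iterate meeting $\li_c(F)$. Pulling back along the well-defined continuous monotone inverse branches of $F^{-kn}$ along the orbit inside $V$, I thus obtain a point of $\cc(F)$ inside $V$, contradicting $V \cap \cc(F) = \emptyset$. Combining all cases yields the desired contradiction, so $(x_0, y_0) \in \overline{\cc(F)}$. The main obstacle will be the saddle case: I need to control the horizontal expansion precisely enough to force an iterate to reach $\li_c(F)$, and to verify that the pullback through the common itinerary lands inside the original neighborhood $V$; both steps rely on the skew-product form, which makes $F^n|_V$ locally invertible as a monotone map twisted by a strong vertical contraction.
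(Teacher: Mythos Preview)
Your overall strategy---produce a periodic point near $(x_0,y_0)$ and derive a contradiction from its local horizontal dynamics---is reasonable, but the saddle case does not work as written. Once you have chosen $V$ with $V\cap\cc(F)=\emptyset$, every forward image $F^k(V)$ is connected and misses $\li_c(F)$, so all points of $V$ share the \emph{same} full itinerary $j_1j_2\ldots\in\{-,+\}^\N$. If your periodic point $(p,q)$ lies in $V$ with period $n$, this common itinerary is $n$-periodic; hence the horizontal return $g:=\Phi_n(\cdot,q)$ satisfies $g^k(x)\in D$ for every $x\in I_0$ and every $k$, where $D$ is the level-$n$ cylinder determined by $j_1\cdots j_n$. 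In particular the iterates $g^k(J)$ of any $J\subset I_0$ remain on one fixed side of $\{0\}$ forever and never meet $\li_c(F)$, so the pullback you describe cannot produce a point of $\cc(F)$ inside $V$. The correct conclusion in the repelling case is different: since $g$ (or $g^2$) is monotone increasing on $D$ and $p$ is topologically repelling, for $x>p$ close to $p$ the sequence $g^k(x)$ is increasing and bounded, hence converges to a fixed point $x^*$ of $g$; then $(x^*,q)$ is a \emph{weakly attracting} periodic point of $F$, which is the actual contradiction. This is exactly the mechanism in the paper's proof.

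Two further gaps. First, you invoke $\lambda=\partial_x\Phi_n(p,q)$, but the fibre maps $f(y)$ are only assumed continuous, so the trichotomy must be stated topologically (one-sided attraction versus repulsion) rather than through a multiplier. Second, in the unbounded-$n_m$ case your ``monotone fixed-point argument'' for $\Phi_{n_m}(\cdot,y_m^*)$ is not justified: you only know $\Phi_{n_m}(x_m,y_m)\approx x_m$, and replacing $y_m$ by $y_m^*$ introduces an error in $\Phi_{n_m}$ that is not controlled uniformly in $n_m$. The paper sidesteps both issues by a more direct route: from a single return $F^m(V)\cap V\neq\emptyset$ it builds a connected $F^\ell$-invariant set $L\supset V$, uses the shrinking neighbourhoods only to show that $y_0$ itself is the fixed point of the vertical contraction $K_{j_{\ell-1}}\circ\cdots\circ K_{j_0}$, and then analyses the monotone self-map $F^\ell$ on a component of $L\cap I(y_0)$. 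An increasing interval self-map with no interval of fixed points always has an attracting fixed point, giving the contradiction without any periodic-point hunt or multiplier analysis.
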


\begin{proof}
The proof is by contradiction.
Suppose that $(x,y)\in\Omega(F)\backslash\overline{\cc(F)}$.
Let $V$ be a neighborhood of $(x,y)$ such that 
$F^n(V)\cap\li_c(F)=\emptyset$ for all $n\geq0$.
Without loss of generality, we can assume that $V=I\times J$, 
where $x\in I$, $y\in J$, and $I,J$ are open intervals. 
Fix $m\geq1$ with $F^m(V)\cap V\neq\emptyset$, and put 
\[
\widehat{L}=\displaystyle\bigcup_{n\geq0} F^n(V).
\]
Since, for each $r=0,\ldots,m-1$, the union 
$\bigcup_{j\geq0} F^{jm+r}(V)$ is connected, the set
$\widehat{L}$ has finitely many connected components.
Set $\widehat{L}=L_1\sqcup\cdots\sqcup L_s$, where $L_1,\ldots,L_s$ are connected.
Using that $F(\widehat{L})\subset\widehat{L}$, for each $i$ 
there exists $j$ such that $F(L_i)\subset L_j$. 
Let $L$ be the connected component containing $(x,y)$.
We have $F^m(L)\cap L\neq\emptyset$, hence $F^m(L)\subset L$.
Take $\ell=\min\{d\geq1;\:F^d(L)\subset L\}$.
Hence, we have that $L$ is a domain such that $F^\ell(L)\subset L$,
$F^i(L)\cap\li_c(F)=\emptyset$ for $i=0,1,\ldots,\ell-1$, and 
$F^i(L)\cap F^j(L)=\emptyset$
for $i,j=0,1,\ldots \ell-1$ with $i\neq j$. 
Therefore there exists a sequence $j_0j_1\cdots j_{\ell-1}\in\{-,+\}^\ell$
such that for any $(a,b)\in L$ and $n\geq0$ 
it holds $F^{n\ell}(a,b)=(a_{n\ell},b_{(j_{\ell-1}\cdots j_1j_0)^n})$, where
$b_{(j_{\ell-1}\cdots j_1j_0)^n}= \left(K_{j_{\ell-1}}\circ\cdots\circ K_{j_0}\right)^n(b)$
and 
\[
a_{n\ell}=f_{j_{\ell-1}}(b_{j_{\ell-2}\cdots j_0(j_{\ell-1}\cdots j_0)^{n-1}})
\circ\cdots\circ
f_{j_{\ell-1}}(b_{j_{\ell-2}\cdots j_0})\circ\cdots\circ 
f_{j_1}(b_{j_0})\circ f_{j_0}(b)(a).
\] 
Let $\pi_2$ be the projection on the second coordinate, $\pi_2(x,y)=y$.
Since $K:=K_{j_{\ell-1}}\circ\cdots\circ K_{j_0}$ 
is a contraction, there exists $w\in\pi_2(\overline{L})$ such that $K(w)=w$.
We claim that $w=y$. To see that,
for each $n\geq1$ take $V_n:=I_n\times J_n\varsubsetneqq I\times J$ 
neighborhood of $(x,y)$ such that 
$I_{n+1}\times J_{n+1}\varsubsetneqq I_n\times J_n$  with 
$|I_n|\to0$ and $|J_n|\to0$ as $n\to\infty$.
As before, for each $n\geq1$ there are
$(x_n,y_n)\in V_n$ and $k_n\geq1$ such that 
$F^{k_n}(x_n,y_n)\in V_n$ and 
$F^{k_n}(x_n,y_n)\to(x,y)$ as $n\to\infty$. 
We may suppose that $k_n\to\infty$ as $n\to\infty$.
Furthermore, by the construction of $L$, 
we have $k_n=r_n\ell$ for some $r_n\geq1$.
We already know that $K$ is a contraction, 
hence there is $\lambda<1$ such that 
\[
|K(x)-K(y)|\leq\lambda|x-y|,\ \forall x,y\in[0,1].
\]
Note that
\begin{eqnarray*}
|K^{r_n}(y_n)-w|&\leq&|K^{r_n}(y_n)-K^{r_n}(y)|+|K^{r_n}(y)-w| \\ 
      					&\leq&\lambda^{r_n}|y_n-y|+|K^{r_n}(y)-w|\to 0,
\end{eqnarray*}
therefore
\[
\pi_2[F^{k_n}(x_n,y_n)]=K^{r_n}(y_n)\to w.
\]
Using that $F^{k_n}(x_n,y_n)\to(x,y)$ as $n\to\infty$, we conclude that $w=y$, as claimed.

Now, for every $(v,y)\in L\cap I(y)$ and $n\geq1$ we get
\[
F^{m\ell}(v,y)=\left(\left(f_{j_{\ell-1}}(y_{j_{\ell-2}\cdots j_0})
\circ\cdots\circ f_{j_0}(y)\right)^m(v),y\right)\in L\cap I(y),
\]
where $f_{j_{\ell-1}}(y_{j_{\ell-2}\cdots j_0})
\circ\cdots\circ f_{j_0}(y)$ is a strictly monotone map. 
Without loss of generality, we assume that
$f_{j_{\ell-1}}(y_{j_{\ell-2}\cdots j_0})\circ\cdots\circ f_{j_0}(y)$ is strictly increasing.
Let $\{I_i\}_i$ be the connected components of $L\cap I(y)$.

\medskip
\noindent
{\sc Claim:} There exists $i_0$ such that $F^\ell(I_{i_0})\subset I_{i_0}$.

\medskip
\noindent
{\em Proof of the claim.} Assume, by contradiction, that it does not hold.
Since the composition $f_{j_{\ell-1}}(y_{j_{\ell-2}\cdots j_0})\circ\cdots\circ f_{j_0}(y)$ 
is strictly increasing, for every $i$ there exists $\sigma(i)$ such that
$F^\ell(I_i)\subset I_{\sigma(i)}$ and $\sup I_i<\inf I_{\sigma(i)}$.
Fixed any $k$, this implies that the sequence $\{F^{\ell n}(I_k)\}_{n\geq0}$
is formed by disjoint intervals satisfying $\sup F^{\ell n}(I_k)<
\inf F^{\ell(n+1)}(I_k)$ and $|F^{\ell n}(I_k)|\to0$ as $n$ goes to infinity.
This implies that there is $(v_0,y)$ such that $F^\ell(v_0,y)=(v_0,y)$ and
$F^{\ell n}(I_k)\to(v_0,y)$, contradicting the non-existence of 
weakly attracting periodic point. The claim is proved.

\medskip
Now we complete the proof of the proposition. Let $i_0$ be given by the claim. The
map $F^\ell:I_{i_0}\to I_{i_0}$ is strictly increasing, hence either $I_{i_0}$ contains 
an interval of periodic points for $F^\ell$ , or some open subinterval 
of $I_{i_0}$ converges to a single periodic point. Both possibilities contradict the 
underlying assumptions on $F$.
\end{proof}


\subsection{Kneading sequences and main results}

Let $\pi_1$ be the projection on first coordinate, $\pi_1(x,y)=x$. Let $F$ be a toy model. 
Consider the alphabet $\mathcal{A}=\{L, 0^-,0^+, R\}$. The letter $L$ means ``left''
and $R$ means ``right''.

\medskip
\noindent
{\sc Address of a point:} The {\em address} of a point $(x,y)\in\text{Dom}(F)$ is 
the letter $t_F(x,y)\in\mathcal A$ defined by:
$$
t_F(x,y)= \left\{
\begin{array}{ll}
L     & \textrm{, if }  \pi_1(x,y)<0     \\
0^- & \textrm{, if }  \pi_1(x,y)=0^- \\
0^+ & \textrm{, if }  \pi_1(x,y)=0^+ \\
R     & \textrm{, if }  \pi_1(x,y)>0.
\end{array}
\right.
$$

\medskip
\noindent
{\sc Itinerary of a point:} The {\em itinerary} of a point $(x,y)\in\text{Dom}(F)$
is the sequence $T_F(x,y)\in\mathcal A^{\{0,1,2,\ldots\}}$ defined by 
\[
T_F(x,y)=(t_F(x,y),t_F(F(x,y)),\ldots,t_F(F^n(x,y)),\ldots).
\]

\medskip
\noindent
{\sc Kneading Sequences:} The {\em kneading sequences} of $F$ are all the itineraries
of elements of $\li_c(F)$. We denote this set of itineraries by $\mathcal{K}_F(\li_c)$. 

\medskip
In the following proposition, we construct conjugacies between the Cantor sets appearing in 
the second coordinate of two toy models. The proof is standard, but we include it for completeness.

\begin{proposition} \label{prop1}
Let $F(x,y)=(f(y)(x), K^F_{{\rm sign}(x)}(y))$ and 
$G(x,y)=(g(y)(x),\\ K^G_{{\rm sign}(x)}(y))$ 
be two toy models. There exists a strictly increasing continuous 
map $\psi:[0,1]\to[0,1]$ such that  
\[
\psi\circ K^F_\pm=K^G_\pm\circ\psi.
\] 
Moreover, $\psi\restriction_{\mathscr{K}^F}$ is independent of the choice of $\psi$.
\end{proposition}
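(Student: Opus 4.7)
The plan is to construct $\psi$ as the uniform limit of iterates built from the conjugacy equation, then use symbolic coding on the Cantor sets to establish strict monotonicity and the uniqueness statement.

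First I would record the boundary constraints forced by $\psi\circ K^F_\pm=K^G_\pm\circ\psi$: from $K^F_+(0)=0$, $K^F_+(1)=a_F$, $K^F_-(0)=1$, $K^F_-(1)=b_F$ and the analogous $G$-identities, one finds $\psi(0)=0$, $\psi(1)=1$, $\psi(a_F)=a_G$, $\psi(b_F)=b_G$. Start from any increasing homeomorphism $\psi_0:[0,1]\to[0,1]$ respecting these four values (say, piecewise affine on $[0,a_F],[a_F,b_F],[b_F,1]$), fix once and for all the affine bijection $\phi_0:[a_F,b_F]\to[a_G,b_G]$, and define recursively
$$
\psi_{n+1}(z)=\begin{cases} K^G_+\circ\psi_n\circ k^F(z), & z\in[0,a_F],\\ \phi_0(z), & z\in[a_F,b_F],\\ K^G_-\circ\psi_n\circ k^F(z), & z\in[b_F,1].\end{cases}
$$
A parity check (both pairs $(k^F\restriction_{[0,a_F]},K^G_+)$ and $(k^F\restriction_{[b_F,1]},K^G_-)$ have matching monotonicity, so each composition is increasing) shows every $\psi_n$ is a strictly increasing continuous self-map of $[0,1]$ fixing the four boundary values. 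The hypothesis $|(k^G)'|>\gamma>1$ gives $|(K^G_\pm)'|\leq\gamma^{-1}<1$, so
$$
\|\psi_{n+1}-\psi_n\|_\infty\leq\gamma^{-1}\|\psi_n-\psi_{n-1}\|_\infty,
$$
hence $(\psi_n)$ is Cauchy in sup norm; its uniform limit $\psi$ is continuous and non-decreasing, and satisfies the conjugacy equation by passing to the limit in the recursion.

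To upgrade $\psi$ to a strictly increasing homeomorphism I would argue as follows. The connected components of $[0,1]\setminus\mathscr{K}^F$ are the iterated $k^F$-preimages of the central gap $(a_F,b_F)$; iterating the conjugacy equation, on a level-$n$ gap $\psi$ equals the composition of $\phi_0$ with $n-1$ suitably chosen monotone branches of $K^G_\pm$ precomposed with iterates of $k^F$, which is strictly monotone increasing on that gap. On $\mathscr{K}^F$ the symbolic coding $j_F:\mathscr{K}^F\to\{-,+\}^\N$ is a homeomorphism, the conjugacy equation identifies $\psi\restriction_{\mathscr{K}^F}$ with $j_G^{-1}\circ j_F$, and the common parity-ordering of both Cantor sets makes this bijection order-preserving. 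Since no nonempty open interval lies inside $\mathscr{K}^F$, a non-decreasing $\psi$ that is strictly increasing on each gap and on $\mathscr{K}^F$ cannot be constant on any subinterval, hence is strictly increasing on $[0,1]$.

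Finally, for the uniqueness of $\psi\restriction_{\mathscr{K}^F}$, let $\psi,\psi'$ both satisfy the conjugacy equation and take $y\in\mathscr{K}^F$ with coding $(j_0,j_1,\ldots)$, so $y=K^F_{j_0}\circ\cdots\circ K^F_{j_{n-1}}((k^F)^n(y))$. Iterating the conjugacy equation,
$$
\psi(y)=K^G_{j_0}\circ\cdots\circ K^G_{j_{n-1}}\bigl(\psi((k^F)^n(y))\bigr),
$$
and the same identity holds for $\psi'$. The image of $K^G_{j_0}\circ\cdots\circ K^G_{j_{n-1}}$ has diameter at most $\gamma^{-n}\to 0$, forcing $\psi(y)=\psi'(y)$. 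I expect the main obstacle to be the strict-monotonicity step: the Banach-type contraction only produces a non-decreasing limit, and the passage to strict monotonicity genuinely relies on the symbolic description of $\mathscr{K}^F$ together with the explicit affine behavior on the base gap.
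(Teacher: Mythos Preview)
Your argument is correct, but it takes a different route from the paper. The paper proceeds directly: it first defines $\psi$ on $\mathscr{K}^F$ via the order-preserving bijection $\mathscr{K}^F\to\{0,1\}^{\mathbb N}\to\mathscr{K}^G$ coming from symbolic coding (this already gives uniqueness on $\mathscr{K}^F$ ``by construction''), then picks an arbitrary strictly increasing homeomorphism $h:(a^F,b^F)\to(a^G,b^G)$ on the central gap and propagates it to every other gap by conjugating with the appropriate branches of $k^F$ and $k^G$. No limiting procedure is needed. Your approach instead builds $\psi$ analytically as the uniform limit of a contraction iteration, and only afterwards invokes the symbolic coding to upgrade the a priori non-decreasing limit to a strict homeomorphism; your uniqueness argument via shrinking cylinder diameters is likewise more explicit than the paper's one-line ``by construction''. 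Both proofs ultimately rest on the same two ingredients---the contraction of $K^G_\pm$ and the symbolic description of $\mathscr{K}^F$---but the paper assembles them in one pass, whereas you make a detour through a Cauchy sequence and then return to the coding anyway. Your method has the merit of being a standard fixed-point template; the paper's is shorter and makes the non-uniqueness off $\mathscr{K}^F$ (the freedom in choosing $h$) more transparent.
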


\begin{proof}
Let $k^F:[0,a^F]\cup[b^F,1]\to[0,1]$ and $k^G:[0,a^G]\cup[b^G,1]\to[0,1]$
the Cantor maps of $F$ and $G$.
If $\mathscr{K}^F$ and $\mathscr{K}^G$ are the Cantor sets generated by $k^F$ and $k^G$ respectively, then
there is a unique strictly increasing bijection 
$\psi:\mathscr{K}^F\to\mathscr{K}^G$ such that $\psi\circ k^F=k^G\circ\psi$. To see this,
observe that if we order $\{0,1\}^\N$ lexicographically, then $\mathscr{K}^F$ and $\mathscr{K}^G$
are isomorphic to $\{0,1\}^\N$ through an order preserving bijection. 
Hence, for each $x\in\mathscr{K}^F$ we take $\psi(x)$ to be
the unique point of $\mathscr{K}^G$ with the same symbolic representation
as $x$ in $\mathscr{K}^F$.
Now we extend $\psi$ continuously to gaps of $\mathscr{K}^F$. Take any 
strictly increasing homeomorphism $h:(a^F, b^F)\to(a^G, b^G)$.
Denote by $k^{F,G}_+$ the restriction ${k^{F,G}}\restriction_{[0, a^{F,G}]}$ and
by $k^{F,G}_-$ the restriction ${k^{F,G}}\restriction_{[b^{F,G}, 1]}$.
Given a gap $J\subset[0,1]$ of $\mathscr{K}^F$, there exists a unique $n\geq1$ and 
a sequence $j_1\cdots j_n\in\{-,+\}^n$ such that
$(k^F_{j_n}\circ\cdots\circ k^F_{j_1})(J)=(a^F, b^F)$. 
Thus, for each $x\in J$ we define $\psi(x)$ by
\[
\psi(x):=[(k^G_{j_1})^{-1}\circ\cdots\circ (k^G_{j_n})^{-1}]
\circ h\circ [k^F_{j_n}\circ\cdots\circ k^F_{j_1}](x).
\]
Clearly $\psi:[0,1]\to[0,1]$ is a strictly increasing continuous 
map with $\psi\circ K^F_\pm=K^G_\pm\circ\psi$.
By construction, $\psi\restriction_{\mathscr{K}^F}$ does not depend on the choice of $\psi$.   
\end{proof}

\begin{definition}\label{deff3}
Consider two toy models $F(x,y)=(f(y)(x), K^F_{{\rm sign}(x)}(y))$ and 
$G(x,y)=(g(y)(x),K^G_{{\rm sign}(x)}(y))$, and let $\psi:[0,1]\to[0,1]$ be the 
map given by Proposition \ref{prop1}. 
We say that {\em $F$ and $G$ have the same kneading sequences}, and 
write $\mathcal{K}_F(\li_c)=\mathcal{K}_G(\li_c)$,
if 
\[
T_F(0^\pm,y)=T_G(0^\pm, \psi(y)),\ \forall y\in[0,1].
\]
\end{definition}

Observe that the above definition does not actually depend on the choice of $\psi$.
Indeed, the invariant set defined by $F$ is 
$\left([-1,0^-]\cup[0^+,1]\right)\times\mathscr{K}^F$, as we know that in $\mathscr{K}^F$
the map $\psi$ is uniquely defined. We are now ready to state 
the main tool to be used in the proof of Theorem A.

\begin{theorem}\label{maintheorem}
Let $F, G$ be two toy models, and assume that $\mathcal{K}_F(\li_c)=\mathcal{K}_G(\li_c)$.
Then there exists a bijective map 
$H:\cc(F)\to\cc(G)$ such that:
\begin{enumerate}[i\,\,\,]
\item[{\rm (i)}] $H\circ F=G\circ H$ on $\cc(F)\backslash\li_c(F)$.
\item[{\rm (ii)}] $H(\cc(F)\cap I(y))\subset\cc(G)\cap I(\psi(y))$ for all $y\in[0,1]$.
\item[{\rm (iii)}] $H\restriction_{\cc(F)\cap I(y)}$ is strictly increasing for all $y\in[0,1]$.
\end{enumerate}
\end{theorem}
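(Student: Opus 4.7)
The plan is to construct $H$ by induction on the \emph{depth} of points of $\cc(F)$, where the depth of $(x,y)\in\cc(F)$ is $d(x,y):=\min\{\ell\ge 0 : F^\ell(x,y)\in\li_c(F)\}$. On the critical line (depth $0$) I set $H(0^\pm,y):=(0^\pm,\psi(y))$; both (ii) and (iii) then hold trivially on $\li_c(F)$ since $\psi$ is strictly increasing. For the inductive step, suppose $H$ is already defined on points of depth $\le\ell$ and take $(x,y)\in\cc(F)$ of depth $\ell+1$, so $F(x,y)$ has depth $\ell$ and $H(F(x,y))$ is defined. Writing $s=\mathrm{sign}(x)\in\{-,+\}$, the map $g_s(\psi(y))$ is a strictly monotone bijection from its half of $I(\psi(y))$ onto $[-1,g(\psi(y))(0)]$, so I set
\[
H(x,y):=\bigl(g_s(\psi(y))^{-1}(\pi_1 H(F(x,y))),\,\psi(y)\bigr).
\]
The intertwining $\psi\circ K^F_\pm=K^G_\pm\circ\psi$ from Proposition \ref{prop1} ensures the correct fiber, which gives (ii), and the defining formula immediately yields $G(H(x,y))=H(F(x,y))$, giving (i).

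For (iii) I would run a parallel induction on depth. Suppose $(x_1,y),(x_2,y)\in\cc(F)\cap I(y)$ with $x_1<x_2$. If $x_1,x_2$ lie on opposite sides of $0$, then $H(x_i,y)$ inherits the sign of $x_i$ from the construction, so the conclusion is automatic. If both lie on the same side $s$, then $F(x_1,y),F(x_2,y)$ lie on the common fiber $K^F_s(y)$ at smaller depth, the strict monotonicity (direct on the $-$ side, reversed on the $+$ side) of $f_s(y)$ transfers to the inductive hypothesis, and the same monotonicity pattern for $g_s(\psi(y))^{-1}$ recovers the inequality for $\pi_1 H(x_1,y)$ and $\pi_1 H(x_2,y)$. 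Bijectivity of $H:\cc(F)\to\cc(G)$ then follows by applying the same construction with $F$ and $G$ exchanged (using a continuous strictly increasing extension of $\psi^{-1}|_{\mathscr{K}^G}$ provided by Proposition \ref{prop1}) to produce an inverse map, and checking by depth induction that the two maps compose to the identity.

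The main obstacle is the well-definedness of $g_s(\psi(y))^{-1}(\pi_1 H(F(x,y)))$, namely that $\pi_1 H(F(x,y))\le g(\psi(y))(0)$; this is precisely where the hypothesis $\mathcal{K}_F(\li_c)=\mathcal{K}_G(\li_c)$ enters. At depth one it reduces to the equivalence $f(y)(0)\ge 0\iff g(\psi(y))(0)\ge 0$, which follows from the equality of addresses $t_F(F(0^\pm,y))=t_G(G(0^\pm,\psi(y)))$ built into Definition \ref{deff3}. At higher depth I would propagate this compatibility along iterates, matching the address of $F^j(0^\pm,y)$ with that of $G^j(0^\pm,\psi(y))$ for every $j\ge 1$ and using these matchings to ensure range-consistency at each preimage step. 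This is the two-dimensional analog of the classical Milnor--Thurston argument and is the part that genuinely requires the kneading hypothesis; the rest of the construction is a clean extension of the one-dimensional proof, made possible by the fiberwise monotonicity of the toy model.
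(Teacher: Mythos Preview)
Your approach is essentially the paper's: both build $H$ by an induction that matches new preimages of the critical line level by level, with the kneading hypothesis ensuring that a new preimage appears on the $F$--side if and only if one appears on the $G$--side. The only organizational difference is that the paper runs the induction through the fiber partitions $\pp_n^F(y)$ rather than through the depth of individual points, which makes your well-definedness obstacle $\pi_1 H(F(x,y))\le g(\psi(y))(0)$ immediate---the endpoints of $F^n(I_i^F(y))$ are forward iterates of critical points, so their left/right addresses (and hence those of the matched $G$--interval) are read off directly from the kneading data.
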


The proof of Theorem \ref{maintheorem} will be given in the next section.
When the map $H$ given by Theorem \ref{maintheorem} exists, we say that
$F$ and $G$ are {\em combinatorially equivalent}.
 
\begin{definition}\label{defconj}
Consider two toy models $F(x,y)=(f(y)(x), K^F_{{\rm sign}(x)}(y))$ and 
$G(x,y)=(g(y)(x),K^G_{{\rm sign}(x)}(y))$, and let $\psi:[0,1]\to[0,1]$ be the 
map given by Proposition \ref{prop1}.
We say that $F$ and $G$ are {\em fiber topologically semiconjugate} 
if there is a continuous surjective map $H:{\rm Dom}(F)\to{\rm Dom}(G)$
which is increasing on intervals, such that
$H\circ F=G\circ H$ and $H(I(y))\subset I(\psi(y))$ for all $y\in [0,1]$.
If $H$ is a homeomorphism, then we say that $F$ and $G$ are {\em fiber topologically conjugate}.
\end{definition}

If $H$ conjugates $F$ and $G$ as above, then $H(0^\pm,y)=(0^\pm,\psi(y))$ for all $y\in[0,1]$.
In particular, $H(\li_c(F))=\li_c(G)$.
The following proposition states that kneading sequences are a topological invariant
for the notion introduced in Definition \ref{defconj}.

\begin{proposition}\label{pprop}
Let $F, G$ be two toy models. If $F,G$ are fiber
topologically conjugate then $\mathcal{K}_F(\li_c)=\mathcal{K}_G(\li_c)$.
\end{proposition}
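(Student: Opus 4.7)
The plan is to show that a fiber topological conjugacy automatically preserves the address function $t_F$, and therefore transports itineraries; the proposition then reduces to evaluating at the critical line.

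First I would record the structural consequences of $H$ being a fiber topological conjugacy. The paragraph immediately preceding the statement observes that $H(0^\pm, y) = (0^\pm, \psi(y))$, so $H$ maps $\li_c(F)$ bijectively onto $\li_c(G)$. Combined with the hypothesis that $H(I(y)) \subset I(\psi(y))$ and that $H$ is increasing on intervals (hence on each fiber $I(y)$), monotonicity forces $H$ to send the left half-fiber $[-1,0^-]\times\{y\}$ onto $[-1,0^-]\times\{\psi(y)\}$ and the right half-fiber $[0^+,1]\times\{y\}$ onto $[0^+,1]\times\{\psi(y)\}$, with the endpoints $0^\pm$ matching. Consequently $\pi_1(x,y)$ and $\pi_1(H(x,y))$ always have the same sign (or both equal $0^-$, or both equal $0^+$), which is precisely the statement
\[
t_F(x,y) = t_G(H(x,y)) \quad \text{for every } (x,y)\in\mathrm{Dom}(F).
\]

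Next I would iterate this identity using the conjugacy equation $H\circ F = G\circ H$, which by induction gives $H\circ F^n = G^n\circ H$ on all of $\mathrm{Dom}(F)$, including on $\li_c(F)$. Then for every $n\geq 0$,
\[
t_F(F^n(x,y)) \;=\; t_G(H(F^n(x,y))) \;=\; t_G(G^n(H(x,y))),
\]
and reading off letter by letter we obtain $T_F(x,y)=T_G(H(x,y))$.

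Finally, specializing to $(x,y)=(0^\pm,y)\in\li_c(F)$ and using $H(0^\pm,y)=(0^\pm,\psi(y))$, this yields
\[
T_F(0^\pm, y) \;=\; T_G(0^\pm, \psi(y)) \qquad \forall\, y \in [0,1],
\]
which by Definition \ref{deff3} is exactly $\mathcal{K}_F(\li_c)=\mathcal{K}_G(\li_c)$. There is no substantive obstacle; the only point worth spelling out carefully is the preservation of addresses, which requires one to combine the monotonicity of $H|_{I(y)}$ with the fact that turning points go to turning points in order to conclude that the two half-fibers are matched correctly. Everything else is routine manipulation.
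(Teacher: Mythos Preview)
Your proof is correct and follows essentially the same approach as the paper's: both use that $H$ sends turning points to turning points and is increasing on fibers to conclude that addresses are preserved, then combine this with the conjugacy relation $H\circ F^n=G^n\circ H$ to match itineraries letter by letter. Your version spells out the half-fiber matching more explicitly than the paper does, but the argument is the same.
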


\begin{proof}
Let $H:{\rm Dom}(F)\to{\rm Dom}(G)$ be conjugacy between $F$ and $G$.
We will show that $T_F(0^\pm,y)=T_G(0^\pm,\psi(y))$ for all $y\in[0,1]$.
Fix $y\in[0,1]$.
Clearly $t_F(0^\pm,y)=t_G(0^\pm,\psi(y))$.
Since $F,G$ are fiber topologically conjugate,
\[
H[F^n(0^\pm,y)]=G^n[0^\pm,\psi(y)],\ \forall n\geq 1.
\]
Since $H$ is strictly increasing on intervals,
\[
t_F(F^n(0^\pm,y))=t_G(G^n[0^\pm,\psi(y)]),\ \forall n\geq 1.
\]
Therefore $T_F(0^\pm,y)=T_G(0^\pm,\psi(y))$. 
\end{proof}

Notice that the above proposition implies the first part of Theorem \ref{TheA}.
Moreover, within the notion of topological conjugacy given by Definition \ref{defconj},
Theorem \ref{TheA} establishes that kneading sequences are a complete topological invariant.
It would be natural to find, inside the class of toy models,
a family of representatives for each  combinatorial equivalence class. 
To better understand this question, we remind a classical result
in one-dimensional dynamics. Consider the {\em quadratic family} $\{f_\mu\}_{0\leq \mu\leq 4}$
of maps $f_\mu:[0,1]\to[0,1]$ defined by $f_\mu(x)=\mu x(1-x)$.
This family is {\em universal} for unimodal maps in the following sense: if $g:[0,1]\to[0,1]$
is a unimodal map then $g$ is combinatorially equivalent to $f_\mu$ for some $\mu=\mu(g)$.
Using this analogy, it is natural to find a universal family for toy models. 
At the moment, it is not clear how to construct such a family, left alone its existence. 

\vspace{.2cm}
Let $F$ be a toy model.

\medskip
\noindent 
{\sc Wandering Domain:} A domain $V\subset{\rm Dom}(F)$ is called {\em wandering} if 
\begin{enumerate}[$\circ$]
\item $F^n(V)\cap F^m(V)=\emptyset$, $\forall n\neq m$;
\item $F^n(V)\cap\li_c(F)=\emptyset$, $\forall n\geq 0$.
\end{enumerate}

\vspace{.2cm} 
For $y\in[0,1]$ and $n\geq1$, let
\[
\cc_n^F(y):=\left\{(x,y)\in\text{Dom}(F)\cap I(y):
\;F^\ell(x,y)\in\li_c(F) 
\;\textrm{for some}\; 0\leq\ell\leq n-1\right\}.
\]
Consider the map $\phi_n^F:[0,1]\to2^{\text{Dom}(F)}$ defined by 
$\phi_n^F(y):=\cc_n^F(y)$. Under some regularity of this function,
we can state a weaker version of Theorem \ref{TheA}.

\begin{theorem}\label{maintheorem2}
Let $F,G$ be two toy models. Assume that
$F,G$ have no wandering domain and that $\mathcal{K}_F(\li_c)=\mathcal{K}_G(\li_c)$. 
If the family $\{\phi_n^G\}_{n\geq 1}$ is equicontinuous
then $F,G$ are fiber topologically semiconjugate.
\end{theorem}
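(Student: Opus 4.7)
The plan is to extend the bijective fiber-monotone map $H : \cc(F) \to \cc(G)$ provided by Theorem \ref{maintheorem} to a continuous surjective semiconjugacy on all of $\text{Dom}(F)$. The extension proceeds by first establishing continuity on $\cc(F)$, then extending to its closure, and finally monotonically filling in the gaps remaining in each fiber.

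First I would prove that $H$ is uniformly continuous on $\cc(F)$. Given $(x_1, y_1), (x_2, y_2) \in \cc(F)$ close to each other, continuity of $\psi$ yields $\psi(y_1)$ close to $\psi(y_2)$, while equicontinuity of $\{\phi_n^G\}_{n \geq 1}$ applied at $\psi(y_1)$ and $\psi(y_2)$ ensures that the finite ordered sets $\cc_n^G(\psi(y_1))$ and $\cc_n^G(\psi(y_2))$ are Hausdorff-close, uniformly in $n$. Combining this with the strict fiber-monotonicity of $H$ and the fact that $H$ takes the ordered enumeration of $\cc(F) \cap I(y_i)$ to the ordered enumeration of $\cc(G) \cap I(\psi(y_i))$ yields a uniform modulus of continuity for $H$.

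Next, I would extend $H$ by continuity to $\overline{\cc(F)}$, preserving fiber-monotonicity. Finally, for each fiber $I(y)$, the complement $I(y) \setminus \overline{\cc(F) \cap I(y)}$ is a countable union of open intervals; within each such gap $(x_1, x_2)$, I would define $H$ as a continuous monotone parametrization onto the closed interval between the already-defined endpoint values (collapsing the gap if the two values coincide). The no-wandering-domain hypothesis on $F$ and $G$ controls the orbits of neighborhoods of such gaps and makes this extension compatible with the dynamics. Once $H$ is defined on $\text{Dom}(F)$, I would verify the four semiconjugacy properties: continuity (from the uniform continuity on $\cc(F)$ and the fiber-wise monotone extensions), fiber preservation $H(I(y)) \subset I(\psi(y))$ and monotonicity on intervals (by construction), surjectivity (from $H(\cc(F)) = \cc(G)$ together with the monotone gap-extension), and the equation $H \circ F = G \circ H$ (holding on $\cc(F) \setminus \li_c(F)$ by Theorem \ref{maintheorem} and extending to the rest by continuity).

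The main obstacle will be Step 1, the continuity of $H$ on $\cc(F)$ in the $y$-direction. The equicontinuity of $\{\phi_n^G\}$ is a uniform-in-$n$ statement about $y$-variation of finite ordered sets, and translating it into a modulus of continuity for the pointwise map $H$ requires fiber-monotonicity in a subtle way: two nearby points in $\cc(F)$ of unbounded combinatorial depth (hitting $\li_c(F)$ only after many iterates) must still have close images, and precisely this uniformity is what the equicontinuity assumption is designed to supply. Secondarily, a careful analysis of how the gaps of $\overline{\cc(F) \cap I(y)}$ vary with $y$, and how they correspond to gaps of $\overline{\cc(G) \cap I(\psi(y))}$, is needed to guarantee that the fiber-wise extensions glue together continuously and that the no-wandering-domain hypothesis rules out obstructions to the semiconjugacy equation across gap-orbits.
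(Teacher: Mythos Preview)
Your approach is viable but differs from the paper's in its technical packaging. The paper does not prove uniform continuity of $H$ on $\cc(F)$ directly; instead, for each $n$ it extends $H_n(y)$ to a homeomorphism $\widetilde{H_n}(y):I(y)\to I(\psi(y))$ that is \emph{linear} on each component of $I(y)\setminus\cc_n^F(y)$, proves each resulting global map $\widetilde{H_n}:\text{Dom}(F)\to\text{Dom}(G)$ is continuous (using the curves $\gamma^F_{j_1\cdots j_\ell}$ of Lemma~\ref{lemmathe2}), and then shows that $\{\widetilde{H_n}\}_{n\geq 1}$ is Cauchy in the sup norm. The equicontinuity of $\{\phi_n^G\}$ enters only in this Cauchy step, together with the fact (Lemma~\ref{lthmB}) that the no-wandering-domain hypothesis forces $\cc(G)$ to be \emph{dense} in $\text{Dom}(G)$, so that one can choose $N$ with $\cc_N^G$ $\delta$-dense and then trap both $\widetilde{H_n}(x,y)$ and $\widetilde{H_m}(x,y)$ between two points of $\cc_N^G(\psi(y))$ that are $6\varepsilon$-close. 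The uniform limit $\widetilde H$ is then automatically continuous on all of $\text{Dom}(F)$, and the semiconjugacy equation is checked by density.

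The practical advantage of the paper's route is that it sidesteps your Step~3 entirely: the piecewise-linear interpolants already live on full fibers, so fiber-gaps are absorbed in the limit rather than filled in by hand. More importantly, you have misread the role of the no-wandering-domain hypothesis: it is not used to ``control the orbits of neighborhoods of gaps'' or to make the extension ``compatible with the dynamics'', but solely to obtain $\overline{\cc(F)}=\text{Dom}(F)$ and $\overline{\cc(G)}=\text{Dom}(G)$ via Lemma~\ref{lthmB}. Your direct uniform-continuity argument for Step~1 would in fact need exactly the same three ingredients --- $\delta$-density of some $\cc_N^G$, equicontinuity in $y$, and fiber-monotonicity of $H$ --- so the two routes rest on the same mechanism; the Cauchy-sequence formulation just organizes them more transparently and avoids having to argue separately that Hausdorff-closeness of the sets $\cc_n^G(\psi(y_i))$ pins down the individual image points $H(x_i,y_i)$.
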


Above, equicontinuity means the following: given $\varepsilon>0$ there is 
$\delta>0$ such that for all $y_1,y_2\in[0,1]$ with $|y_1-y_2|<\delta$ it holds 
$d_\mathcal{H}(\cc_n^G(y_1),\cc_n^G(y_2))<\varepsilon$ for {\em all} $n\geq1$,
where $d_\mathcal{H}$ is the Hausdorff distance. 
At the moment, we observe that the assumptions that $G$ has no wandering domain and that
$\{\phi_n^G\}$ is equicontinuous imply that $G$ has no wandering intervals, therefore
Theorem \ref{maintheorem2} follows from Theorem \ref{TheA}. Nevertheless,
we will give an independent proof Theorem \ref{maintheorem2} in the next section.

\subsection{Examples}

We now give some examples of toy models, and discuss the
hypotheses required in Theorem \ref{TheA} and Theorem \ref{maintheorem2}. 

\begin{example}
Let $f:[-1,1]\to[-1,1]$ be the {\em tent map}, i.e.
$f(x)=1+2x$ for $x\in [-1,0]$ and $f(x)=1-2x$ for $x\in[0,1]$. Let $k:[0,1]\to[0,1]$ be the Cantor 
map defined by $k(y)=3y$ if $y\in[0,1/3]$ and $k(y)=3-3y$ if $y\in[2/3,1]$. 
The map $F(x,y):=(f(x), K(x,y))$ satisfies the hypotheses of Theorems \ref{TheA}
and \ref{maintheorem2}.
\end{example}

\begin{example}
Let $f:[-1,1]\to[-1,1]$ be a unimodal map with $0$ as turning point and containing
a wandering interval $I_0$. Let $k$ be the Cantor map of example 1. 
The toy model $F(x,y):=(f(x), K(x,y))$ has wandering intervals $I_0\times\{y\}$.
\end{example}

\begin{example}
Let $q,f,g:[-1,1]\to[-1,1]$ be three unimodal maps defined by
$$
q(x)=-x^2,\ \ 
f(x)= 
\left\{
\begin{array}{rl}
x   & \textrm{, if } x\leq0 \\
-x  & \textrm{, if } x\geq0
\end{array}
\right.,
\ \ g(x)= \left\{
\begin{array}{rl}
-\sqrt{-x} & \textrm{, if } x\leq0 \\
-\sqrt{x}  & \textrm{, if } x\geq0. 
\end{array}
\right.
$$

\begin{figure}[H]\centering
  \begin{tikzpicture}
	\draw[] (0,0) -- (3,0) ;
	\draw[] (0,0) -- (0,3) ;
	\draw[] (0,3) -- (3,3) ;
	\draw[] (3,0) -- (3,3) ;
	\draw[dashed] (0,0) -- (3,3) ;
	\draw[dashed] (0,3) -- (3,0) ;
	\draw[color=red,thick] (1.5,1.5) parabola (0,0) ;
	\draw[color=red,thick] (1.5,1.5) parabola (3,0) ;
	\fill[red,thick] (2.3,1.3) circle (0.mm) node[above] {$q$};
	\draw[] (4,0) -- (7,0) ;
	\draw[] (4,0) -- (4,3) ;
	\draw[] (4,3) -- (7,3) ;
	\draw[] (7,0) -- (7,3) ;
	\draw[dashed] (5.5,1.5) -- (7,3) ;
	\draw[dashed] (4,3) -- (5.5,1.5) ;
	\draw[blue,thick] (4,0) -- (5.5,1.5);
	\draw[blue,thick] (7,0) -- (5.5,1.5);
	\fill[blue,thick] (6.3,1.3) circle (0.mm) node[above] {$f$};
	\draw[] (8,0) -- (11,0) ;
	\draw[] (8,0) -- (8,3) ;
	\draw[] (8,3) -- (11,3) ;
	\draw[] (11,0) -- (11,3) ;
	\draw[dashed] (8,0) -- (11,3) ;
	\draw[dashed] (8,3) -- (11,0) ;
	\coordinate (l) at (8,0);
  \coordinate (m) at (9.5,1.5);
	\draw[color=purple,thick] (l) to [bend right=30] (m);
	\coordinate (a) at (11,0);
  \coordinate (b) at (9.5,1.5);
	\draw[color=purple,thick] (a) to [bend left=30] (b);
	\fill[purple,thick] (10.3,1.3) circle (0.mm) node[above] {$g$};
\end{tikzpicture}
\end{figure}
Let $k$ be the Cantor map defined in example 1, and define the toy models
$Q(x,y)=(q(x), K(x,y))$, $F(x,y)=(f(x), K(x,y))$, and $G(x,y)=(g(x), K(x,y))$. Then
\[
\mathcal{K}_Q(\li_c)=\mathcal{K}_F(\li_c)=\mathcal{K}_G(\li_c).
\]
By Theorem \ref{maintheorem}, there exists $H_{Q,F}:\mathscr{C}(Q)\to\mathscr{C}(F)$,
$H_{F,G}:\mathscr{C}(F)\to\mathscr{C}(G)$ and $H_{G,Q}:\mathscr{C}(G)\to\mathscr{C}(Q)$.
On the other hand, they are not conjugate 
since the turning point of $q,f,g$ is respectively an attracting, 
neutral and repelling fixed point. 

\end{example}


\subsection{Generalized toy models}\label{ApenA}

In this section we introduce a larger class of toy models, changing unimodal to multimodal maps.
We say that an interval map $f:[0,1]\to[0,1]$ is an {\em $\ell$-modal map} if it has 
exactly $\ell$ turning points $c_1<\cdots<c_\ell$. This means that 
the intervals $[0,c_1],[c_1,c_2],\ldots,[c_\ell,1]$ are the largest intervals in which $f$ is monotone.
Fixed  $c_1<\cdots<c_\ell$, consider a one-parameter family
\[
f(y):[0,1]\to[0,1],\ y\in[0,1],
\]
of $\ell$-modal maps with turning points $c_1<\cdots<c_\ell$ such that $y\in[0,1]\mapsto f(y)$ is
continuous. We write $c_0=0,c_{\ell+1}=1$, and 
$I_1=[c_0,c_1),I_2=(c_1,c_2),\ldots,I_{\ell+1}=(c_\ell,c_{\ell+1}]$ for the intervals of monotonicity.
For each $y\in[0,1]$, let $f_i(y)$ be $f(y)\restriction_{I_i}$, see Figure \ref{f(yy)}.
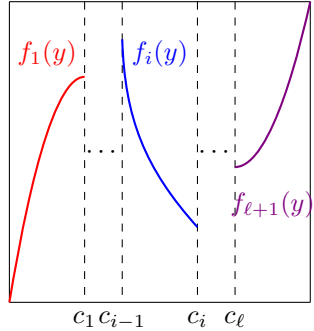
\begin{figure}[H]\centering
\begin{tikzpicture}[scale=1]
      \draw[] (3,-1) -- (-1,-1);
	  \draw[] (3,3) -- (3,-1);
	  \draw[] (3,3) -- (-1,3);
	  \draw[] (-1,3) -- (-1,-1);
	  \draw[dashed] (0,3) -- (0,-1) node[below]{$c_1$};
      \draw[dashed] (0.5,3) -- (0.5,-1) node[below] {$c_{i-1}$};
	  \draw[dashed] (1.5,3) -- (1.5,-1) node[below]{$c_i$};
      \draw[dashed] (2,3) -- (2,-1) node[below] {$c_\ell$};
      \draw[color=red,thick] (0,2) parabola (-1,-1) ;
	  \draw[color=violet,thick] (2,0.8) parabola (3,3) ;
	  \fill[black] (0.25,0.8) circle (0.mm) node[above] {$\cdots$};
	  \fill[black] (1.75,0.8) circle (0.mm) node[above] {$\cdots$};
	  \fill[red,thick] (-0.5,2) circle (0.mm) node[above] {$f_1(y)$};
	  \fill[blue,thick] (1,2) circle (0.mm) node[above] {$f_i(y)$};
	  \fill[violet,thick] (2.5,0) circle (0.mm) node[above] {$f_{\ell+1}(y)$};
      \coordinate (A) at (0.5,2.5);
      \coordinate (B) at (1.5,0);
	  \draw[color=blue,thick] (A) to [bend right=20] (B);
\end{tikzpicture}
\caption{Branches of $f(y)$.}
\label{f(yy)}
\end{figure}

\noindent
Let $0=a_1<b_1<a_2<b_2<\cdots<a_{\ell+1}<b_{\ell+1}=1$, and let
$k:[a_1,b_1]\cup\cdots\cup[a_{\ell+1},b_{\ell+1}]\to[0,1]$
be a differentiable map such that
$k(\{a_i,b_i\})\subset\{0,1\}$ and $\vert k'\vert>\gamma>1$. 
We define $K(x,y)$ by
\[
K(x,y)=K_i(y),\;\text{for}\; x\in I_i,
\]
where $K_i=(k\restriction_{[a_i, b_i]})^{-1}$, $i=1,\ldots, \ell+1$, are the inverse branches of $k$.

\medskip
\noindent
{\sc Generalized Toy Model:} It is the map defined by
\[
\begin{array}{crcl}
F \ : & \! ([0,1]\backslash\{c_1,\ldots,c_\ell\})\times [0,1]& \! \longrightarrow  
& \! [0,1]\times[0,1] \\
        & \! (x,y) & \! \longmapsto      & \! (f(y)(x), K(x,y)).
\end{array}
\]
We can similarly introduce extra points to make the domain of $F$ become compact.
For each $i=1,\ldots,\ell$, introduce points $c_i^i$ and $c_i^{i+1}$ and extend $F$ via the formula
$F(c^i_i,y)=(f_i(y)(c_i),K_i(y))$ and $F(c^{i+1}_i,y)=(f_{i+1}(y)(c_i),K_{i+1}(y))$, see Figure \ref{AcFitmg}.

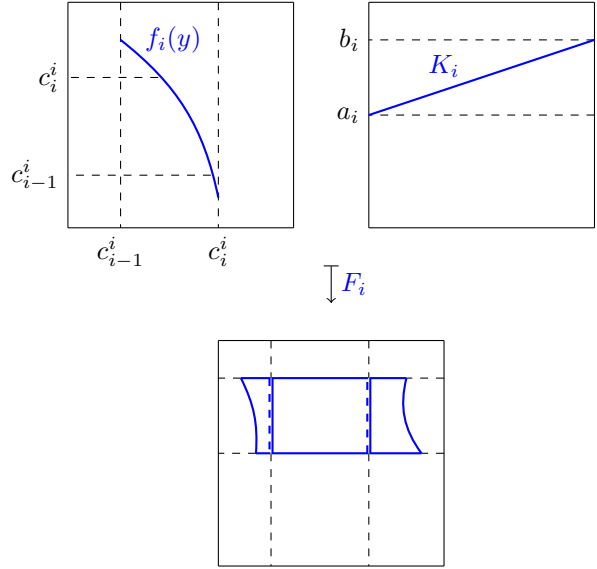
\begin{figure}[H]\centering
  \begin{tikzpicture}
	\draw[] (0,0) -- (3,0) ;
	\draw[] (0,0) -- (0,3) ;
	\draw[] (0,3) -- (3,3) ;
	\draw[] (3,0) -- (3,3) ;
	\draw[dashed] (0.7,3) -- (0.7,0) ;
	\draw[dashed] (1.9,0.7) -- (0,0.7) ;
	\draw[dashed] (2,3) -- (2,0) ;
	\draw[dashed] (1.2,2) -- (0,2) ;
	\fill[black] (0.7,0) circle (0.mm) node[below] {$c_{i-1}^i$};
	\fill[black] (2,0) circle (0.mm) node[below] {$c_i^i$};
	\fill[black] (0,0.7) circle (0.mm) node[left] {$c_{i-1}^i$};
	\fill[black] (0,2) circle (0.mm) node[left] {$c_i^i$};
	\fill[blue,thick] (1.4,2.2) circle (0.mm) node[above] {$f_i(y)$};
	\coordinate (l) at (0.7,2.5);
  \coordinate (m) at (2,0.4);
	\draw[color=blue,thick] (l) to [bend left=20] (m);
	\draw[] (4,0) -- (7,0) ;
	\draw[] (4,0) -- (4,3) ;
	\draw[] (4,3) -- (7,3) ;
	\draw[] (7,0) -- (7,3) ;
	\draw[dashed] (7,2.5) -- (4,2.5) ;
	\draw[dashed] (7,1.5) -- (4,1.5) ;
	\fill[black] (4,2.5) circle (0.mm) node[left] {$b_i$};
	\fill[black] (4,1.5) circle (0.mm) node[left] {$a_i$};
	\draw[blue,thick] (4,1.5) -- (7,2.5) ;
	\fill[blue,thick] (5,1.87) circle (0.mm) node[above] {$K_i$};
	\draw[|->] (3.5,-0.5) -- (3.5,-1);
	\fill[blue,thick] (3.5,-0.75) circle (0.mm) node[right] {$F_i$};
	\draw[] (2,-1.5) -- (5,-1.5) ;
	\draw[] (2,-1.5) -- (2,-4.5) ;
	\draw[] (2,-4.5) -- (5,-4.5) ;
	\draw[] (5,-4.5) -- (5,-1.5) ;
	\draw[dashed] (2.3,-2) -- (2,-2) ;
	\draw[dashed] (5,-2) -- (4.5,-2) ;
	\draw[dashed] (2.5,-3) -- (2,-3) ;
	\draw[dashed] (5,-3) -- (4.7,-3) ;
	\draw[dashed] (2.7,-3) -- (2.7,-4.5) ;
	\draw[dashed] (2.7,-1.5) -- (2.7,-2) ;	
	\draw[dashed] (4,-1.5) -- (4,-2) ;
	\draw[dashed] (4,-3) -- (4,-4.5) ;
	\draw[blue,thick] (2.5,-3) -- (2.69,-3) ;
	\draw[dashed,blue,thick] (2.68,-2) -- (2.68,-3) ;
	\draw[blue,thick] (2.71,-3) -- (3.99,-3) ;
	\draw[blue,thick] (2.72,-2) -- (2.72,-3) ;
	\draw[blue,thick] (4.01,-3) -- (4.7,-3) ;
	\draw[blue,thick] (2.3,-2) -- (2.69,-2) ;
	\draw[blue,thick] (2.71,-2) -- (3.99,-2) ;
	\draw[dashed,blue,thick] (3.98,-3) -- (3.98,-2) ;
	\draw[blue,thick] (4.02,-2) -- (4.02,-3) ;
	\draw[blue,thick] (4.01,-2) -- (4.5,-2) ;
	\coordinate (a) at (2.3,-2);
  \coordinate (b) at (2.5,-3);
	\draw[color=blue,thick] (a) to [bend left=15] (b);
	\coordinate (c) at (4.5,-2);
  \coordinate (d) at (4.7,-3);
	\draw[color=blue,thick] (c) to [bend right=25] (d);
	
\end{tikzpicture}
\caption{Action of $F_i$ on $[c_{i-1}^i,c_i^i]\times [0,1]$.}
\label{AcFitmg}
\end{figure}

For  $i=1,\ldots,\ell+1$, let $I_i$ be an interval on the construction of the family $y\mapsto f(y)$. 
We make the following convention.
If $f(y)$ is strictly increasing on $I_i$, we put
\[
\begin{array}{crcl}
F_i \ : & \! [c_{i-1}^i,c_i^i]\times [0,1]& \! \longrightarrow  
& \! ([0,c_1^1]\cup(c_1^2,c^2_2]\cup\cdots\cup(c_\ell^{\ell+1},1])\times[0,1] \\
        & \! (x,y) & \! \longmapsto      & \! (f_i(y)(x), K_i(y)).
\end{array}
\]
If $f(y)$ is strictly decreasing on $I_i$, we put
\[
\begin{array}{crcl}
F_i \ : & \! [c_{i-1}^i,c_i^i]\times [0,1]& \! \longrightarrow  
& \! ([0,c_1^1)\cup[c_1^2,c^2_2)\cup\cdots\cup[c_\ell^{\ell+1},1])\times[0,1] \\
        & \! (x,y) & \! \longmapsto      & \! (f_i(y)(x), K_i(y)).
\end{array}
\]
For each $(x,y)\in\text{Dom}(F)$, there exists
$j(x,y)=(j_1j_2\cdots j_m\cdots)\in\{1,\ldots, \ell+1\}^\N$
such that
\[
F^{m+1}(x,y)=F_{j_{m+1}}(x_m,y_m),\ \forall m\geq 0,
\]
where $y_m=(K_{j_m}\circ\cdots\circ K_{j_1})(y)=:y_{j_m\cdots j_1}$ and
$x_m=f_{j_m}(y_{j_{m-1}\cdots j_1})\circ\cdots\circ f_{j_2}(y_{j_1})\circ f_{j_1}(y)(x)$. 
Consider the alphabet $\mathcal{A}=\{I_1,c_1^1,c_1^2,I_2,\ldots,
c_\ell^\ell,c_\ell^{\ell+1},I_{\ell+1}\}$.

\medskip
\noindent
{\sc Address of a point:} The {\em address} of a point $(x,y)\in\text{Dom}(F)$ is 
the letter $t_F(x,y)\in\mathcal A$ defined by
$$
t_F(x,y)= \left\{
\begin{array}{ll}
I_i     & \textrm{, if }\pi_1(x,y)\in I_i     \\
c_i^i & \textrm{, if } \pi_1(x,y)=c^i_i \\
c_i^{i+1} & \textrm{, if } \pi_1(x,y)=c^{i+1}_i.
\end{array}
\right.
$$

\medskip
\noindent
{\sc Itinerary of a point:} The {\em itinerary} of a point $(x,y)\in\text{Dom}(F)$
is the sequence $T_F(x,y)\in\mathcal A^{\{0,1,2,\ldots\}}$ defined by 
\[
T_F(x,y)=(t_F(x,y),t_F(F(x,y)),\ldots,t_F(F^n(x,y)),\ldots).
\]

\medskip
\noindent
{\sc Critical Line:} The {\em critical line} of $F$ is the set
\[
\li_c(F)=\{(c_i^i, y),(c_i^{i+1},y): y\in[0,1]\;\text{and}\;i=1,\ldots, \ell\}.
\]

\medskip
\noindent
{\sc Kneading Sequences:} The {\em kneading sequences} of $F$ are all the itineraries
of elements of $\li_c(F)$. We denote this set of itineraries by $\mathcal{K}_F(\li_c)$. 

\medskip
With these more general definitions, a result similar to Theorem \ref{TheA} holds. 

\begin{theorem}\label{theorem-generalized}
Let $F,G$ be two generalized toy models, and assume that
neither of them has wandering intervals, interval of periodic points nor
weakly attracting periodic points. Then $F,G$ are fiber topologically conjugate 
if and only if they have the same kneading sequences.
\end{theorem}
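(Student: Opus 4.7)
The plan is to mirror the proof of Theorem \ref{TheA}, generalizing each step to accommodate the $\ell$ turning points on each fiber. The forward direction (\emph{fiber conjugacy implies same kneading sequences}) is a direct repetition of Proposition \ref{pprop}: given a fiber topological conjugacy $H$, the monotonicity of $H$ on each fiber combined with $H(I(y))\subset I(\psi(y))$ forces $H$ to send each vertical segment of $\li_c(F)$ onto the corresponding segment of $\li_c(G)$, and preservation of the address function $t$ by order-preserving maps, together with the conjugacy equation $H\circ F=G\circ H$, yields $T_F(c_i^j,y)=T_G(c_i^j,\psi(y))$ for every critical-line point.

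For the reverse direction, the plan proceeds in three stages. First, I would establish a multimodal analogue of Theorem \ref{maintheorem}: under the assumption $\mathcal{K}_F(\li_c)=\mathcal{K}_G(\li_c)$, there is a bijection $H:\cc(F)\to\cc(G)$ that commutes with $F,G$ off $\li_c(F)$, sends $\cc(F)\cap I(y)$ into $\cc(G)\cap I(\psi(y))$, and is strictly increasing on each fiber. The construction is inductive on the depth $n$ at which a preimage reaches $\li_c(F)$: the set $\cc_n^F(y)$ partitions $I(y)$ into monotonicity intervals for $F^n$, and matching itineraries along $\psi$ determines an order-preserving bijection with $\cc_n^G(\psi(y))$ at each step. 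Second, I would adapt Proposition \ref{nonwanprop} to show $\Omega(F)\subset\overline{\cc(F)}$; the original argument transfers essentially verbatim once the two-letter alphabet $\{-,+\}$ is replaced by $\{1,\dots,\ell+1\}$, since the contracting Cantor map still forces return domains to collapse onto a single fiber and strict monotonicity of the one-dimensional return map between consecutive turning-point preimages still produces a forbidden weakly attracting periodic orbit or interval of periodic points.

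Third, I would extend $H$ to a fiber topological conjugacy on $\text{Dom}(F)$. Fiber monotonicity allows $H$ to extend uniquely by continuity to $\overline{\cc(F)\cap I(y)}$. For each complementary open interval $J\subset I(y)\setminus\overline{\cc(F)}$, every point of $J$ lies outside $\cc(F)$, so all forward iterates $F^n(J)$ are intervals disjoint from $\li_c(F)$; then $J$ is either wandering (excluded by hypothesis) or non-wandering, in which case the Proposition \ref{nonwanprop} argument yields a weakly attracting periodic orbit or an interval of periodic points, both excluded. Hence $\overline{\cc(F)\cap I(y)}=I(y)$, and $H$ is a fiber homeomorphism. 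Joint continuity in $(x,y)$ then follows from continuity of $\psi$ and of the families $y\mapsto f(y),g(y)$, combined with density of $\cc(F)$ in each fiber and fiber monotonicity of $H$.

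The main obstacle I anticipate is the combinatorial bookkeeping in the first stage: each turning point $c_i$ splits into two symbols $c_i^i,c_i^{i+1}$ according to which side a preimage approaches, so the matching between $\cc_n^F(y)$ and $\cc_n^G(\psi(y))$ must respect this side information consistently across all $2\ell$ components of $\li_c(F)$ simultaneously. This is precisely the content of the hypothesis $\mathcal{K}_F(\li_c)=\mathcal{K}_G(\li_c)$, which must be invoked with care at each inductive level to ensure that the order-preserving bijections constructed on the branches of $F$ glue into a single coherent bijection on all of $\cc(F)$.
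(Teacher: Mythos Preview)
Your proposal is correct and aligns with the paper's own treatment: the paper does not give a separate proof of Theorem \ref{theorem-generalized} but simply states that it ``follows exactly the same ideas and arguments as in the proof of Theorem \ref{TheA}, but it demands a much heavier notation,'' and your plan is precisely that adaptation. The only place where your outline is slightly looser than the paper's argument for Theorem \ref{TheA} is the joint continuity step: the paper establishes it not by an abstract appeal to continuity of $\psi$ and the families $y\mapsto f(y),g(y)$, but by constructing explicit continuous curves $\gamma^F_{j_1\cdots j_n}$ through preimage points (Lemma \ref{lemmathe2}) and trapping $H$-images between the corresponding curves $\gamma^G_{j_1\cdots j_n}$ in $G$; you would need the multimodal analogue of that lemma, but this is again just a notational extension.
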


The proof of Theorem \ref{theorem-generalized} follows exactly the same ideas and arguments
as in the proof of Theorem \ref{TheA}, but it demands a much heavier notation. Therefore we decided
not to include its proof and leave the details to the interested reader.


\section{Proofs of the main results}\label{section-proofs}

We now present the proofs of our main results. Given a set $X$, let $\partial X$ denote its boundary.

\subsection{Proof of Theorem \ref{maintheorem}}

A similar proof of Theorem \ref{maintheorem} can be found in \cite[Thm1]{Rand}.
Remind that
\[
\cc(F)=\left\{(x,y)\in\text{Dom}(F):\exists\; \ell\geq0 
\;\textrm{such that}\; F^\ell(x,y)\in\li_c(F)\right\}
\]
and that for $y\in[0,1]$ and $n\geq1$ we have
\[
\cc_n^F(y):=\left\{(x,y)\in\text{Dom}(F)\cap I(y):
\;F^\ell(x,y)\in\li_c(F) 
\;\textrm{for some}\; 0\leq \ell\leq n-1\right\}.
\]
Setting
\[
\cc^F(y):=\displaystyle\bigcup_{n\geq 1}\cc_n^F(y),
\]
\\
we have
\[
\cc(F)=\coprod_{y\in[0,1]} \cc^F(y).
\]
\\
We can similarly define the sets $\cc(G),\cc_n^G(y),\cc^G(y)$.
Suppose that, for all $y\in[0,1]$ and $n\geq1$, there exists a strictly 
increasing bijection $H_n(y):\cc_n^F(y)\to\cc_n^G(\psi(y))$ such that:
\begin{enumerate}[$\circ$]
\item $H_n(y)\circ F=G\circ H_n(y)$ on $\cc_n^F(y)\backslash\{(0^\pm,y)\}$,
\item $H_n(y)\restriction_{\cc_{n-1}^F(y)}=H_{n-1}(y)$.
\end{enumerate}
Since $\psi$ is a bijection, we have
\[
\cc(G)=\coprod_{y\in[0,1]}\cc^G(\psi(y)).
\]
Therefore, we can define $H:\cc(F)\longrightarrow \cc(G)$ by
$H(x,y)=H_n(y)(x,y)$ where $n$ is some (any) integer for which $(x,y)\in\cc_n^F(y)$.
It follows directly that $H\circ F=G\circ H$ on $\mathscr{C}(F)\backslash\li_c(F)$.
Hence the proof of Theorem \ref{maintheorem} is reduced to 
constructing the maps $H_n(y)$. This is what we will now do.

Let $\pp_{n}^F(y)$ be the partition of $I(y)$ induced by $\cc_n^F(y)$.
More specifically,
\[
\pp_{n}^F(y):=\left\{I_i^F(y)\subset I(y):\partial I_i^F(y)
\subset\cc_{n}^F(y)\cup\{(-1,y), (1,y)\},
1\leq i\leq k_n^F(y)\right\},
\]
where $k_n^F(y):=\#\pp_{n}^F(y)$ and the increasing order of the index of 
$I_i^F(y)$ is the same as the intervals are placed in $I(y)$. Note that the partition $\pp_{n}^F(y)$
includes two intervals, one with left endpoint $0^+$ and one with
right endpoint $0^-$.

Given a sequence
$j=j_1j_2\cdots j_m \in\{-, +\}^m$, we will use the following notation 
\[
f_j(y):=f_{j_m}(y_{j_{m-1}\cdots j_1})\circ\cdots\circ 
f_{j_3}(y_{j_2j_1})\circ f_{j_2}(y_{j_1})\circ f_{j_1}(y),
\]
where $y_{j_\ell\cdots j_1}=(K_{j_\ell}\circ\cdots\circ K_{j_1})(y)$
for $ 1\leq \ell\leq m$. 

Given $(x,y)\in\cc_n^F(y)\backslash\{(0^\pm,y)\}$, we will identify the pair $(x,y)$
with $x(y)_{j(x,y)}^F$, where $j(x,y)=j_1j_2\cdots j_k \in\{-, +\}^k$ is the minimal sequence such that
\[
F^k\left(x(y)_{j(x,y)}^F\right)=\left(f_{j(x,y)}(y)(x(y)_{j(x,y)}^F), 
y_{j_k\cdots j_1}\right)=\left(0^{j_k}, y_{j_k\cdots j_1}\right)
\]
for some $1\leq k\leq n-1$.
We will also abuse notation
by considering $x(y)_{j(x,y)}^F$ either as an ordered pair in order to apply $F$ or as a real number
in order to apply unimodal maps.

For each $I_i^F(y)\in\pp_{n}^F(y)$ there exists a unique minimal sequence
$j(I_i^F(y))=j^i_1j_2^i\cdots j_n^i\in\{-,+\}^n$
such that the function 
\[
(z,y)\in\textrm{int}\left(I_i^F(y)\right)\mapsto
\left(f_{j(I_i^F(y))}(y)(z),y_{j^i_n \cdots j^i_1}\right)
\] 
is strictly monotone. Hence
\[
\partial\left[I_i^F(y)\right]\subset\left\{(-1,y), (1,y), (0^\pm,y), 
x(y)_{j^i_1j_2^i\cdots j_k^i}^F\right\}
\]
for some $1\leq k\leq n-1$. For each $n\geq1$, set
\[
A_n^F(y):=\biggl\{j_1j_2\cdots j_k\in\{-, +\}^k:  
\begin{array}{c}
\exists (x,y)\in\cc_n^F(y)\backslash\{(0^\pm,y)\}\textrm{ such that }\\
(x,y)=x(y)^F_{j_1j_2\cdots j_k}\textrm{ for some }1\leq k\leq n-1
\end{array}
\biggr\}.
\]
We similarly define $\pp_{n}^G(\psi(y))$ and $A_n^G(\psi(y))$. 
Now, we will use induction on $n$ to complete the proof of the theorem.

Our underlying assumptions imply that we can define 
$H_1(y):\cc_1^F(y)\to\cc_1^G(\psi(y))$ with
$(0^\pm,y)\mapsto(0^\pm,\psi(y))$.
By induction, suppose that there exists a strictly increasing map
$H_n(y):\cc_n^F(y)\to\cc_n^G(\psi(y))$ such that
$H_n(y)\circ F=G\circ H_n(y)$ on $\cc_n^F(y)\backslash\{(0^\pm,y)\}$ for every $y\in[0,1]$. 
In particular, $\#\pp_{n}^F(y)=\#\pp_{n}^G(\psi(y))$ and 
$A_n^F(y)=A_n^G(\psi(y))$ for all $y\in[0,1]$.

Let $I_i^F(y)\in\pp_{n}^F(y)$. There is a unique sequence 
$j=j_1j_2\cdots j_n\in\{-,+\}^n$ with 
\[
\partial\left[I_i^F(y)\right]\subset
\left\{(\pm1, y),(0^\pm, y),x(y)_{j_1\cdots j_k}^F\right\}
\]
for some $1\leq k\leq n-1$. We analyse the possibilities for the interval $I_i^F(y)$.
The first possibility is
\[
I_i^F(y)=\left[\left[x(y)_{j_1\cdots j_k}^F, x(y)_{j_1\cdots j_\ell}^F\right],y\right]
\]
for $1\leq k\neq \ell\leq n-1$. By the induction hypothesis, we have
\[
I_i^G(\psi(y))=\left[\left[x(\psi(y))_{j_1\cdots j_k}^G, 
x(\psi(y))_{j_1\cdots j_\ell}^G\right],y\right].
\]
Moreover, $H_n(y)(x(y)_{j_1\cdots j_k}^F)=x(\psi(y))_{j_1\cdots j_k}^G$
and $H_n(y)(x(y)_{j_1\cdots j_\ell}^F)=x(\psi(y))_{j_1\cdots j_\ell}^G$.
Hence $F^n[I_i^F(y)]$ is equal to the union
$$
\begin{array}{c}
\bigl[\bigl(f_{j_n}(y_{j_{n-1}\cdots j_1})
\circ\cdots\circ f_{j_{k+1}}(y_{j_{k}\cdots j_1})(0^{j_{k+1}}),\hspace{1cm} \\ 
f_{j_n}(y_{j_{n-1}\cdots j_1})
\circ\cdots\circ f_{j_{\ell+1}}(y_{j_{\ell}\cdots j_1})(0^{j_{\ell+1}})\bigr)
, y_{j_n\cdots j_1}\bigr]
\end{array}
\cup\Lambda(I_i^F(y)) 
$$
where $\Lambda(I_i^F(y))=\{F^{n-k}(0^{j_k},y_{j_k\cdots j_1}),
F^{n-\ell}(0^{j_\ell},y_{j_\ell\cdots j_1})\}$, and $G^n[I_i^G(\psi(y))]$ is equal to the union
$$
\begin{array}{c}
\bigl[\bigl(g_{j_n}(\psi(y)_{j_{n-1}\cdots j_1})
\circ\cdots\circ g_{j_{k+1}}(\psi(y)_{j_{k}\cdots j_1})(0^{j_{k+1}}),\hspace{1cm} \\
g_{j_n}(\psi(y)_{j_{n-1}\cdots j_1})
\circ\cdots\circ g_{j_{l+1}}(\psi(y)_{j_{\ell}\cdots j_1})(0^{j_{\ell+1}})\bigr)
, y_{j_n\cdots j_1}\bigr] 
\end{array}
\cup\;\Lambda(I_i^G(\psi(y)))
$$
where $\Lambda(I_i^G(\psi(y)))=\{G^{n-k}(0^{j_k},\psi(y)_{j_k\cdots j_1}),
G^{n-\ell}(0^{j_\ell},\psi(y)_{j_\ell\cdots j_1})\}$.

Since $\mathcal{K}_F(\li_c)=\mathcal{K}_G(\li_c)$, we have
\[
F^n[I_i^F(y)]\cap\li_c(F)\neq\emptyset \ \iff \ G^n[I_i^G(\psi(y))]\cap\li_c(G)\neq\emptyset.
\]
If $F^n[I_i^F(y)]\cap\li_c(F)=\emptyset$, 
then $I_i^F(y)\in\pp_{n+1}^F(y)$.
On the other hand, if 
$\Lambda(I_i^F(y))\cap\li_c(F)\neq\emptyset$
we get $I_i^F(y)\in\pp_{n+1}^F(y)$ since
$F$ is injective. In any case, we also have
$I_i^G(\psi(y))\in\pp_{n+1}^G(\psi(y))$.

Now, if $\left(F^n[I_i^F(y)]\backslash\Lambda(I_i^F(y))\right)
\cap\li_c(F)\neq\emptyset$ then there is a unique point
\[
x(y)_{j_1j_2\cdots j_n}^F:=\left(f_{j_1}^{-1}(y)\circ\cdots\circ
f_{j_n}^{-1}(y_{j_{n-1}\cdots j_1})(0^{j_n}),y\right)\in
{\rm int}\left(I_i^F(y)\right),
\]
and so $\left(G^n[I_i^G(\psi(y))]\backslash\Lambda(I_i^G(\psi(y)))\right)
\cap\li_c(G)\neq\emptyset$, which in turn implies the existence of a unique point
\[
x(\psi(y))_{j_1j_2\cdots j_n}^G:=\left(g_{j_1}^{-1}(\psi(y))\circ\cdots\circ
g_{j_n}^{-1}(\psi(y)_{j_{n-1}\cdots j_1})(0^{j_n}),\psi(y)\right)\in
\textrm{int}\left(I_i^G(\psi(y))\right).
\]
Therefore we can define 
$$
H_{n+1}(y)(x(y)_{j_1j_2\cdots j_n}^F):=x(\psi(y))_{j_1j_2\cdots j_n}^G.
$$
The other possibilities $I_i^F=[[-1, x(y)^F_{j_1j_2\cdots j_k}], y]$,
$I_i^F=[[x(y)^F_{j_1j_2\cdots j_k}, 1], y]$,
and $I_i^F=[[x(y)^F_{j_1j_2\cdots j_k}, 0^\pm], y]$, $1\leq k\leq n-1$, are treated similarly.
Finally, defining $H_{n+1}(x,y)=H_n(x,y)$ for $(x,y)\in\cc_n^F(y)$, we have just constructed
a strictly increasing map
\[
H_{n+1}(y):\cc_{n+1}^F(y)\to\cc_{n+1}^G(\psi(y))
\]
such that:
\begin{enumerate}[$\circ$]
\item $H_{n+1}(y)\circ F=G\circ H_{n+1}(y)$ on $\cc_{n+1}^F(y)\backslash\{(0^\pm,y)\}$, and
\item $H_{n+1}(y)\restriction_{\cc_n^F(y)}=H_n(y)$. 
\end{enumerate}
This concludes the proof of Theorem \ref{maintheorem}.


\subsection{Proof of Theorem \ref{TheA}}

First, we shall prove some auxiliary lemmas.

\begin{lemma}\label{lemmathe}
Let $y\mapsto f(y)$ be the one-parameter family of unimodal maps, as in the definition of toy models.
For every $y\in[0,1]$ and $\varepsilon>0$
there is $\delta>0$ satisfying the following: if $y'\in[0,1]$, $x,x'\in[-1,1]$
are such that $d(y,y')<\delta$, $d(x,x')<\delta$ and there exist
$x(y)=f^{-1}_j(y)(x)$ and $x'(y')=f^{-1}_j(y')(x')$, where $j=-\; or\; +$,
then $d(x(y),x'(y'))<\varepsilon$.
\end{lemma}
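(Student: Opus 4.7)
The plan is to prove this by a straightforward intermediate value theorem argument, using the continuity of $y \mapsto f(y)$ (understood as uniform convergence of the branches $f_\pm(y')$ to $f_\pm(y)$ on compact sets as $y' \to y$).

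Fix $y$ and $\varepsilon>0$, set $x_0 = f_j^{-1}(y)(x)$, and assume without loss of generality that $j = +$, so that $f_+(y)$ is strictly decreasing on $[0^+,1]$ with image $[-1, f(y)(0)]$. I would first treat the interior case $x_0 \in (0^+, 1)$: pick $\varepsilon' \in (0, \varepsilon)$ small enough that $[x_0 - \varepsilon', x_0 + \varepsilon'] \subset (0^+, 1)$, and define
\[
\eta := \min\bigl(f_+(y)(x_0 - \varepsilon') - x,\ x - f_+(y)(x_0 + \varepsilon')\bigr),
\]
which is strictly positive by strict monotonicity of $f_+(y)$. By continuity of the family, choose $\delta_1 > 0$ such that $|y - y'| < \delta_1$ implies $|f_+(y')(z) - f_+(y)(z)| < \eta/3$ for $z \in \{x_0 \pm \varepsilon'\}$. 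Then for $\delta := \min(\delta_1, \eta/3)$, whenever $|y-y'|<\delta$ and $|x-x'|<\delta$ we still have $f_+(y')(x_0 - \varepsilon') > x'$ and $f_+(y')(x_0 + \varepsilon') < x'$. The IVT together with the monotonicity of $f_+(y')$ then forces the (unique) preimage $x'(y') = f_+^{-1}(y')(x')$ to lie in $(x_0 - \varepsilon', x_0 + \varepsilon')$, giving $d(x(y), x'(y')) < \varepsilon$.

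Next I would handle the boundary cases $x_0 \in \{0^+, 1\}$, which correspond to $x = f(y)(0)$ and $x = -1$ respectively. These are done by the same one-sided IVT argument: e.g.\ if $x_0 = 0^+$, pick $\varepsilon' < \varepsilon$ and note that $f_+(y)(\varepsilon') < f(y)(0) = x$, so the same continuity estimate yields $f_+(y')(\varepsilon') < x'$ for $(y',x')$ close to $(y,x)$; combined with the standing hypothesis that $x'$ actually has a preimage under $f_+(y')$ (i.e.\ $x' \leq f(y')(0)$), the IVT places $f_+^{-1}(y')(x') \in [0^+, \varepsilon')$. The case $x_0 = 1$ is symmetric, using $f_+(y')(1) = -1 \leq x'$. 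The decreasing branch $j = -$ is handled identically after reversing inequalities.

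The only real subtlety is the boundary case $x_0 = 0^+$ (or $0^-$): since the image $[-1, f(y)(0)]$ of $f_+(y)$ itself moves with $y$, a nearby point $x'$ need not a priori belong to the image of $f_+(y')$. This is precisely the role of the hypothesis that $x'(y') = f_j^{-1}(y')(x')$ \emph{exists}, which is what permits us to invoke the IVT on $[0^+, \varepsilon']$. Beyond this bookkeeping, the proof is routine.
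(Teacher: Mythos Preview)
Your IVT argument establishes the right pointwise statement but misses the uniformity the lemma actually demands: you fix $x$ (via $x_0=f_j^{-1}(y)(x)$) before choosing $\varepsilon'$, $\eta$, and $\delta$, so your $\delta$ depends on $x$ as well as on $y$ and $\varepsilon$. The lemma, however, requires a single $\delta=\delta(y,\varepsilon)$ that works for \emph{all} $x,x'$ simultaneously. This is not merely cosmetic: as $x_0\to 0^+$ or $x_0\to 1$ your $\varepsilon'$ is forced to shrink to keep $[x_0-\varepsilon',x_0+\varepsilon']\subset(0^+,1)$, and your separate boundary treatments yield yet other $\delta$'s, with no common positive lower bound exhibited. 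The gap closes with one additional compactness step---for instance, what you have really shown is continuity of $(x,y')\mapsto f_j^{-1}(y')(x)$ on its (compact) domain $\{(x,y'):y'\in[0,1],\ x\in[-1,f(y')(0)]\}$, whence uniform continuity and a uniform $\delta$ follow---but that step is absent from the write-up.

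The paper takes a shorter route that handles the uniformity automatically: it argues by contradiction, extracting sequences $y_n\to y$ and $x_n,x_n'$ with $|x_n-x_n'|\to 0$ whose $f_j$-preimages remain $\varepsilon$-apart, passes to subsequential limits $x_1\neq x_2$ lying on the same monotone branch, and uses the uniform convergence $f(y_n)\to f(y)$ to deduce $f(y)(x_1)=f(y)(x_2)$, contradicting injectivity of $f_j(y)$. Since the contradicting $x_n$ are allowed to vary, a single application of sequential compactness replaces all of your case analysis and the missing uniformity step.
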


\begin{proof}
The proof is by contradiction.
Suppose there is $\varepsilon>0$ such that for all $n\geq1$ there are
$y_n\in[0,1]$ and $x_n^y,x_n^{y_n}\in[-1,1]$ with $d(y,y_n)<\tfrac{1}{n}$ 
and $d(x_n^y,x_n^{y_n})<\tfrac{1}{n}$ such that $x_n(y)=f^{-1}_j(y)(x_n^y)$ and
$x_n(y_n)=f^{-1}_j(y_n)(x_n^{y_n})$ exist and $d(x_n(y),x_n(y_n))\geq\varepsilon$.
By compactness, we may suppose that $\lim x_n^y=\lim x_n^{y_n}=x_0$,
$\lim x_n(y)=x_1$ and $\lim x_n(y_n)=x_2$.
Note that $x_1\neq x_2$ and that either $x_1,x_2\in[-1,0]$ or $x_1,x_2\in[0,1]$. 
In particular $f(y)(x_1)\neq f(y)(x_2)$. But, using that $f(y_n)$ 
converges uniformly to $f(y)$, we have
$f(y)(x_1)=\lim f(y)(x_n(y))=\lim x_n^y=\lim x_n^{y_n}=\lim f(y_n)(x_n(y_n))=f(y)(x_2)$,
a contradiction.
\end{proof}

Let $\xi_-:([-1,0^-]\cup(0^+,1])\times[0,1]\to[-1,0^-]$ and 
$\xi_+:([-1,0^-)\cup[0^+,1])\times[0,1]\to[0^+,1]$
be two maps defined by 
$$
\xi_j(x,y)= \left\{
\begin{array}{cl}
f_j^{-1}(y)(x)  & \textrm{, if }\; x\in\text{Im}(f_j(y)) \\
0^j             & \text{, otherwise}.
\end{array}
\right.
$$
Lemma \ref{lemmathe} has the following consequence.

\begin{corollary}\label{corothem}
The maps $\xi_j$ defined above are continuous.
\end{corollary}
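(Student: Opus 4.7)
The plan is to establish continuity of $\xi_j$ at an arbitrary point $(x,y)$ in its domain by a sequential argument, splitting according to the position of $x$ relative to the image $\mathrm{Im}(f_j(y)) = [-1, f(y)(0)]$. Writing $M(y) := f(y)(0)$, continuity of the family $y\mapsto f(y)$ gives continuity of $M$, which controls when the inverse branch is defined. Let $(x_n, y_n) \to (x,y)$; I will show $\xi_j(x_n, y_n) \to \xi_j(x,y)$ in each of three cases.

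If $x < M(y)$, then for all $n$ large enough $x_n < M(y_n)$, so both $f_j^{-1}(y)(x)$ and $f_j^{-1}(y_n)(x_n)$ are defined. Lemma \ref{lemmathe}, applied directly, gives $\xi_j(x_n, y_n) = f_j^{-1}(y_n)(x_n) \to f_j^{-1}(y)(x) = \xi_j(x,y)$. If $x > M(y)$, then continuity of $M$ yields $x_n > M(y_n)$ for large $n$, so $\xi_j(x,y) = 0^j$ and $\xi_j(x_n, y_n) = 0^j$ for large $n$, making convergence trivial.

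The delicate case is the boundary $x = M(y)$, where $\xi_j(x,y) = 0^j$ and the sequence may oscillate between the two defining branches. I would split indices into $A = \{n : x_n \leq M(y_n)\}$ and $B = \{n : x_n > M(y_n)\}$. On $B$ the value is already $0^j$. On $A$, I would take any subsequential limit $z$ of $f_j^{-1}(y_n)(x_n)$; this $z$ lies in the closed half (either $[-1,0^-]$ or $[0^+,1]$) according to $j$. Uniform convergence $f(y_n) \to f(y)$, together with continuity, gives $f(y)(z) = \lim f(y_n)(f_j^{-1}(y_n)(x_n)) = \lim x_n = M(y) = f(y)(0)$. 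Since $f(y)$ is unimodal with unique maximum at $0$, this forces $z = 0^j$, so the entire sequence converges to $0^j = \xi_j(x,y)$.

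The main obstacle is precisely this boundary case, where the definition of $\xi_j$ transitions from a genuine inverse branch to the constant value $0^j$; the key insight is that unimodality of $f(y)$ pins down any limit of the inverses to be the turning point on the appropriate side. All other cases reduce immediately to Lemma \ref{lemmathe} or continuous dependence of $M(y)$ on $y$.
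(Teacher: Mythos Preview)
Your proof is correct and follows essentially the same three-case split as the paper's proof (interior of the image, exterior, and boundary), using Lemma~\ref{lemmathe} and continuity of $y\mapsto f(y)(0)$ in the same way. The only difference is cosmetic: the paper uses an $\varepsilon$--$\delta$ formulation and dismisses the boundary case $x=M(y)$ as ``treated analogously'', whereas you give an explicit subsequential argument there; note that on your index set $A$ both $f_j^{-1}(y)(x)=0^j$ and $f_j^{-1}(y_n)(x_n)$ exist, so you could simply invoke Lemma~\ref{lemmathe} again rather than reproving its content.
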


\begin{proof}
Let $(x,y)\in\text{Dom}(\xi_j)$ and $\varepsilon>0$. Assume that 
$x\in[-1, f_j(y)(0^j))$. Since the family $y\mapsto f(y)$
depends continuously on $y$, there exists $\delta>0$ such that
for every $(x',y')\in\text{Dom}(\xi_j)$ satisfying $d(x,x')<\delta$ and $d(y,y')<\delta$,
we have $x'\in[-1, f_j(y')(0^j))$.
From Lemma \ref{lemmathe}, if $\delta>0$ is small enough then
\[
d(\xi_j(x,y),\xi_j(x',y'))=d(f^{-1}_j(y)(x),f^{-1}_j(y')(x'))<\varepsilon.
\]
Suppose now that $x>f_j(y)(0^j)$. Again by continuity, there exists
$\delta>0$ such that for all $(x',y')\in\text{Dom}(\xi_j)$ satisfying $d(x,x')<\delta$ 
and $d(y,y')<\delta$ we have $x'>f_j(y')(0^j)$. Therefore,
$\xi_j(x,y)=0^j=\xi_j(x',y')$ and so 
\[
d(\xi_j(x,y),\xi_j(x',y'))=0.
\]
The case $x=f_j(y)(0^j)$ is treated analogously.

\end{proof}

The following lemma is fundamental to prove the 
continuity of the conjugacy in Theorem \ref{TheA}.

\begin{lemma}\label{lemmathe2}
For every $(x,y)\in\cc(F)$ there exists a continuous curve 
$\gamma^F:[0,1]\to\cc(F)$ of the form $\gamma^F(w)=(\widetilde{\gamma}^F(w),w)$ 
such that $\gamma^F(y)=(x,y)$.
\end{lemma}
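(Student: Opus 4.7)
The plan is to construct the curve $\gamma^F$ by iterating backward branches of $F$ along the itinerary prescribed by the forward orbit of $(x,y)$. First, dispose of the trivial case $(x,y) \in \li_c(F)$: if $x = 0^\pm$, set $\gamma^F(w) := (0^\pm, w)$ with the same sign as $x$; continuity and $\gamma^F(w) \in \li_c(F) \subset \cc(F)$ are immediate.

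Assume now $(x,y) \in \cc(F) \setminus \li_c(F)$ and let $\ell \geq 1$ be minimal with $F^\ell(x,y) \in \li_c(F)$. Each iterate $F^{i-1}(x,y)$, for $i=1,\ldots,\ell$, lies outside $\li_c(F)$, so its first coordinate has a well-defined sign $j_i \in \{-, +\}$. Put $w_0 := w$ and $w_i := K_{j_i}(w_{i-1})$ for $i = 1, \ldots, \ell$. Define $\widetilde{\gamma}^F$ by a reverse recursion: set $z_\ell(w) := 0^{j_\ell}$, and for $i = \ell, \ell-1, \ldots, 1$ let
\[
z_{i-1}(w) := \xi_{j_i}\bigl(z_i(w),\, w_{i-1}\bigr),
\]
and put $\widetilde{\gamma}^F(w) := z_0(w)$. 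Since each $K_{j_i}$ is continuous and each $\xi_{j_i}$ is continuous by Corollary~\ref{corothem}, each $z_i$ depends continuously on $w$, so $\gamma^F(w) := (\widetilde{\gamma}^F(w), w)$ is a continuous curve defined on $[0,1]$.

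To verify the required properties, observe that at $w = y$ the iterated preimages involved in the recursion genuinely exist, being witnessed by the forward orbit of $(x,y)$, so each $\xi_{j_i}$ acts as a true inverse branch; a reverse induction yields $z_i(y) = \pi_1(F^i(x,y))$, and in particular $\gamma^F(y) = (x,y)$. For a general $w \in [0,1]$ I would argue inductively: either all $z_i(w)$ avoid the critical line, in which case $F^i(\gamma^F(w)) = (z_i(w), w_i)$ for every $i \leq \ell$ and therefore $F^\ell(\gamma^F(w)) \in \li_c(F)$; or at some first step $i$ the recursion degenerates with $z_i(w) = 0^{j_i}$ because the preimage was absent, and then already $F^i(\gamma^F(w)) = (0^{j_i}, w_i) \in \li_c(F)$. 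In either case $\gamma^F(w) \in \cc(F)$.

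The only technical delicacy I anticipate is ensuring that the composition makes sense when the recursion produces an intermediate value $z_i(w) = 0^{j_{i+1}}$ that lies outside the nominal domain of $\xi_{j_i}$; this happens precisely when $j_{i+1} \neq j_i$, since $\xi_-$ does not accept $0^+$ nor does $\xi_+$ accept $0^-$. I would resolve this by extending each $\xi_j$ continuously across the duplicated critical line via the natural convention $\xi_j(0^{-j}, w) := \xi_j(0^{j}, w)$; a direct inspection of the formula defining $\xi_\pm$, together with the half-neighborhood topology on $\mathrm{Dom}(F)$ near $\li_c(F)$, shows that this extension is continuous, so the composition defining $\widetilde{\gamma}^F$ remains continuous throughout $[0,1]$.
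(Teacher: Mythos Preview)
Your proof is correct and follows essentially the same approach as the paper's: both construct $\gamma^F$ by composing the maps $\xi_{j_i}$ along the itinerary $j_1\cdots j_\ell$ determined by the forward orbit of $(x,y)$, invoke Corollary~\ref{corothem} for continuity, and verify membership in $\cc(F)$ by tracking where the backward recursion first degenerates to the critical line. You are in fact slightly more careful than the paper in flagging and resolving the domain issue for $\xi_j$ at $0^{-j}$ (the paper's recursion has the same issue when $j_i\neq j_{i+1}$ but passes over it silently); note only the minor index slip in your degeneration case, where the value produced is $z_i(w)=0^{j_{i+1}}$ rather than $0^{j_i}$, since $z_i$ lies in the range of $\xi_{j_{i+1}}$.
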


\begin{proof}
Given $(x,y)\in\cc(F)$ there exists $n\geq1$ and a 
sequence $(j_1\cdots j_n)\in\{-,+\}^n$
such that $(x,y)=(f^{-1}_{j_1}(y)\circ\cdots\circ 
f^{-1}_{j_n}(y_{j_{n-1}\cdots j_1})(0^{j_n}),y)$.
Consider a sequence of curves defined inductively as follows:
\begin{enumerate}[$\circ$]
\item $\gamma^F_{j_1}:[0,1]\to\text{Dom}(F)$ is defined by 
$\gamma^F_{j_1}(w)=(\xi_{j_1}(0^{j_1},w),w)$.
\item $\gamma^F_{j_1j_2}:[0,1]\to\text{Dom}(F)$ is defined by 
$\gamma^F_{j_1j_2}(w)=(\xi_{j_1}(\pi_1(\gamma^F_{j_2}(w_{j_1})),w),w)$.
\item$\cdots$
\item $\gamma^F_{j_1\cdots j_n}:[0,1]\to\text{Dom}(F)$ is defined by 
$\gamma^F_{j_1\cdots j_n}(w)=(\widetilde{\gamma}^F_{j_1\cdots j_n}(w),w)$,
where $\widetilde{\gamma}^F_{j_1\cdots j_n}(w):=
\xi_{j_1}(\pi_1(\gamma^F_{j_2\cdots j_n}(w_{j_1})),w)$.
\end{enumerate}
From Corollary \ref{corothem}, each $\gamma^F_{j_1\cdots j_n}:
[0,1]\to\text{Dom}(F)$ is a continuous curve. 
By construction, $\gamma^F_{j_1\cdots j_n}(y)=(x,y)$.
Now it is quite easy to see that $\gamma^F_{j_1\cdots j_n}([0,1])\subset\cc(F)$.
For each $w\in[0,1]$, put 
\[
\ell_{j_1\cdots j_n}(w):=\max\{
1\leq k\leq n: f^{-1}_{j_1}(w)\circ\cdots\circ 
f^{-1}_{j_k}(w_{j_{k-1}\cdots j_1})(0^{j_k})\; \text{exists}\}.
\]
Hence, if $\ell_{j_1\cdots j_n}(w)$ exists then
\[
\gamma^F_{j_1\cdots j_n}(w)=\left(f^{-1}_{j_1}(w)\circ\cdots\circ 
f^{-1}_{j_{\ell_{j_1\cdots j_n}(w)}}(w_{j_{\ell_{j_1\cdots j_n}
(w)-1}\cdots j_1})(0^{j_{\ell_{j_1\cdots j_n}(w)}}),w\right).
\]
On the other hand, if $\ell_{j_1\cdots j_n}(w)$ does not exist, then
\[
\gamma^F_{j_1\cdots j_n}(w)=(0^{j_1},w).
\]
In any case we have $\gamma^F_{j_1\cdots j_n}([0,1])\subset\cc(F)$.
\end{proof}

Here is a direct consequence of Lemma \ref{lemmathe2}:
if $(x,y),(z,w)\in\cc(F)$ are such that $\exists n\geq1$ and
$(j_1\cdots j_n)\in\{-,+\}^n$
with $(x,y)=(f^{-1}_{j_1}(y)\circ\cdots\circ 
f^{-1}_{j_n}(y_{j_{n-1}\cdots j_1})(0^{j_n}),y)$ and 
$(z,w)=(f^{-1}_{j_1}(w)\circ\cdots\circ 
f^{-1}_{j_n}(w_{j_{n-1}\cdots j_1})(0^{j_n}),w)$,
then there is a continuous curve 
$\gamma^F:[0,1]\to\cc(F)$ such that $\gamma^F(y)=(x,y)$ and $\gamma^F(w)=(z,w)$.

\begin{proof}[Proof of Theorem \ref{TheA}.]
Because of Proposition \ref{pprop}, we only need to prove the reverse implication.
We will employ similar notation to the one used in the proof of Theorem \ref{maintheorem}.
For $y\in[0,1]$, let $H(y):\cc^F(y)\to\cc^G(\psi(y))$ be the map defined by
$H(y)(x,y):=H_n(y)(x,y)$,
where $n$ is some (any) integer such that $(x,y)\in\cc_n^F(y)$. We will extend 
$H(y)$ to $I(y)$. Take $(z,y)\in\overline{\cc^F(y)}\backslash\cc^F(y)$
and suppose that there exist $(z_n,y)^j\in\cc^F(y)$, $j=1,2$, such that $(z_n,y)^1\uparrow (z,y)$
and $(z_n,y)^2\downarrow (z,y)$. The cases where we have only 
$(z_n,y)\uparrow (z,y)$ or $(z_n,y)\downarrow (z,y)$ are similar. Since $G$ has no wandering
intervals, no intervals of periodic points and no weakly attracting
periodic point, $\cc^G(\psi(y))$ is dense on $I(\psi(y))$.
Thus the limits $\lim H(y)((z_n,y)^1)=:H^1(y)(z,y)$ and 
$\lim H(y)((z_n,y)^2)=:H^2(y)(z,y)$ exist and $H^1(y)(z,y)=H^2(y)(z,y)$. 
Therefore, for $(z,y)\in\overline{\cc^F(y)}\backslash\cc^F(y)$ we can define
\[
H(y)(z,y):=\lim H(y)(z_n,y),
\]
where $(z_n,y)\in\cc^F(y)$ is some (any) sequence such that $(z_n,y)\to(z,y)$.
Since $\cc^F(y)$ is dense on $I(y)$, we obtain the map $H:\text{Dom}(F)\to\text{Dom}(G)$ 
defined by $H(x,y):=H(y)(x,y)$. Note that $H$ is strictly increasing on each fiber $I(y)$.
\newline

\medskip
\noindent
{\sc Claim:} The map $H$ is continuous. 
\newline

\begin{proof}[Proof of the claim.]
Let $(x,y)\in\text{Dom}(F)\backslash\{(\pm1,y),(0^\pm,y): y\in[0,1]\}$ (the case
$(x,y)\in\{(\pm1,y),(0^\pm,y): y\in[0,1]\}$ can be treated similarly). 
Let $B_\varepsilon(H(x,y))$ be a neighborhood of $H(x,y)$ for some $\varepsilon>0$.
Since $\overline{\cc^G(\psi(y))}=I(\psi(y))$,
there are $z_1<x<z_2$ such that $(z_1,y),(z_2,y)\in\cc(F)$ and
$H(z_i,y)\in B_\varepsilon(H(x,y))\cap I(\psi(y))$, $i=1,2$. Note that 
$\pi_1[H(z_1,y)]<\pi_1[H(x,y)]<\pi_1[H(z_2,y)]$. 
By Lemma \ref{lemmathe2},  there are 
continuous curves $\gamma^G_{j_1\cdots j_\ell}$ and $\gamma^G_{e_1\cdots e_k}$ 
such that $\gamma^G_{j_1\cdots j_\ell}(\psi(y))=H(z_1,y)$ and 
$\gamma^G_{e_1\cdots e_k}(\psi(y))=H(z_2,y)$. By continuity, there are
$y_1<y<y_2$ so that the domain $D(H(x,y))$ whose boundary is formed by the two curves 
$\gamma^G_{j_1\cdots j_\ell}([\psi(y_1),\psi(y_2)])$,
$\gamma^G_{e_1\cdots e_k}([\psi(y_1),\psi(y_2)])$ and the two intervals
$[(\gamma^G_{j_1\cdots j_\ell}(\psi(y_1)),\gamma^G_{e_1\cdots e_k}(\psi(y_1))), \psi(y_1)]$, 
$[(\gamma^G_{j_1\cdots j_\ell}(\psi(y_2)),\gamma^G_{e_1\cdots e_k}(\psi(y_2))), \psi(y_2)]$ 
contains $H(x,y)$ and satisfies $D(H(x,y))\subset B_\varepsilon(H(x,y))$, see Figure \ref{HHH}.
Again by Lemma \ref{lemmathe2}, for every sequence 
$t_1\cdots t_m\in\{-,+\}^m$ and $w\in[0,1]$ we get 
$H(\gamma^F_{t_1\cdots t_m}(w))=\gamma^G_{t_1\cdots t_m}(\psi(w))$.
Let $D(x,y)$ be the domain whose boundary is formed by the curves
$\gamma^F_{j_1\cdots j_\ell}([y_1,y_2])$,
$\gamma^F_{e_1\cdots e_k}([y_1,y_2])$ and the  intervals
$[(\gamma^F_{j_1\cdots j_\ell}(y_1),\gamma^F_{e_1\cdots e_k}(y_1)), y_1]$, 
$[(\gamma^F_{j_1\cdots j_\ell}(y_2),\gamma^F_{e_1\cdots e_k}(y_2)), y_2]$. 
Such domain contains a neighborhood of $(x,y)$. 
Since $H$ is monotone on fibers, we have $H(D(x,y))=D(H(x,y))$. 
This proves the claim.
\end{proof}

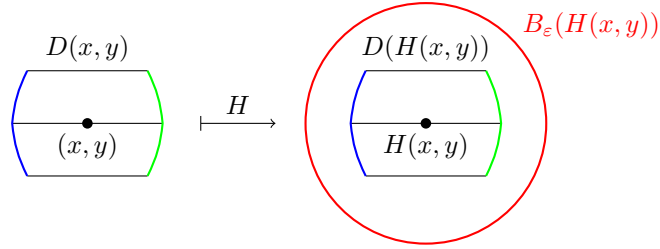
\begin{figure}[H]\centering
	\begin{tikzpicture}
\draw[color=black] (0,0) -- (2,0) ;
\draw[color=black] (0.2,-0.7) -- (1.8,-0.7) ;
\draw[color=black] (0.2,0.7) -- (1.8,0.7) ;
	\coordinate (a) at (0,0);
  \coordinate (b) at (0.2,0.7);
	\coordinate (c) at (0.2,-0.7);
	\draw[color=blue,thick] (c) to [bend left=10] (a) to [bend left=10] (b);
	\coordinate (d) at (2,0);
  \coordinate (e) at (1.8,0.7);
	\coordinate (f) at (1.8,-0.7);
	\draw[color=green,thick] (f) to [bend right=10] (d) to [bend right=10] (e);
\fill[black] (1,0) circle (0.mm) node[below]{$(x,y)$};
\fill[black] (1,0.7) circle (0.mm) node[above]{$D(x,y)$};
\fill[black] (1,0) circle (0.7 mm);
 \draw[|->] (2.5,0) -- (3.5,0) ;
\fill[black] (3,0) circle (0.mm) node[above]{$H$};
\fill[black] (5.5,0.7) circle (0.mm) node[above]{$D(H(x,y))$};
\fill[red] (7.7,1) circle (0.mm) node[above]{$B_\varepsilon(H(x,y))$};
\draw[color=black] (4.5,0) -- (6.5,0) ;
\draw[color=black] (4.7,-0.7) -- (6.3,-0.7) ;
\draw[color=black] (4.7,0.7) -- (6.3,0.7) ;
	\coordinate (A) at (4.5,0);
  \coordinate (B) at (4.7,-0.7);
	\coordinate (C) at (4.7,0.7);
	\draw[color=blue,thick] (C) to [bend right=10] (A) to [bend right=10] (B);
	\coordinate (D) at (6.5,0);
  \coordinate (E) at (6.3,0.7);
	\coordinate (F) at (6.3,-0.7);
	\draw[color=green,thick] (F) to [bend right=10] (D) to [bend right=10] (E);
\fill[black] (5.5,0) circle (0.mm) node[below]{$H(x,y)$};
\draw[red,thick] (5.5,0) circle (1.6cm);
\fill[black] (5.5,0) circle (0.7 mm);

\end{tikzpicture}
\caption{Action of $H$ on $D(x,y)$.}
\label{HHH}
\end{figure}

At the moment, we know that $H$ is injective (by construction) and surjective (by the claim).
It remains to show that $H\circ F=G\circ H$. From Theorem \ref{maintheorem},
we already have $H\circ F=G\circ H$ on $\cc(F)\backslash\li_c(F)$.
Now, take $(z,y)\in\text{Dom}(F)\backslash\cc(F)$
and let $(z_n,y)\in\cc^F(y)\backslash\{0^\pm,y\}$ such that $\lim(z_n,y)=(z,y)$
(the case $(z,y)\in\li_c(F)$ is treated analogously).
Using that $F$ is continuous on $\text{Dom}(F)\backslash\cc(F)$ and that
$H(z,y)\in\overline{\cc^G(\psi(y))}\backslash\cc^G(\psi(y))$, we conclude that
\[
(H\circ F)(z,y)=\lim (H\circ F)(z_n,y_n) = \lim (G\circ H)(z_n,y_n)=(G\circ H)(z,y).
\]
This finishes the proof of the theorem.
\end{proof}

We finish this section remarking that the same proof above can be used
to prove a variant of Theorem \ref{TheA}, with assumptions only on $G$: 
Let $F,G$ be two toy models, and assume that $G$ has no wandering intervals,
no interval of periodic points nor weakly attracting periodic points.
If $F,G$ have the same kneading sequences, then they are fiber topologically semiconjugate.


\subsection{Proof of Theorem \ref{maintheorem2}}
We begin with a lemma whose proof is similar to that of Proposition \ref{nonwanprop}.

\begin{lemma}\label{lthmB}
Let $F$ be a toy model. If $F$ has no wandering domain, then 
\[
\overline{\cc(F)}={\rm Dom}(F).
\]
\end{lemma}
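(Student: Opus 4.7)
The plan is to argue by contradiction, producing a genuine wandering domain whenever $\overline{\cc(F)} \neq \text{Dom}(F)$. Suppose $(x,y) \in \text{Dom}(F) \setminus \overline{\cc(F)}$ and let $V$ be a rectangular open neighborhood of $(x,y)$ disjoint from $\overline{\cc(F)}$; backward invariance of $\cc(F)$ under $F$ then gives $F^n(V) \cap \li_c(F) = \emptyset$ for every $n \geq 0$, so condition (ii) in the definition of wandering domain is automatic for $V$ and its subsets. If the iterates $\{F^n(V)\}_{n\geq 0}$ are already pairwise disjoint, $V$ is itself a wandering domain and we contradict the hypothesis. Otherwise, after replacing $V$ by a forward iterate (still disjoint from $\cc(F)$), I may assume $F^\ell(V) \cap V \neq \emptyset$ for some $\ell \geq 1$, and the opening portion of the proof of Proposition \ref{nonwanprop} applies to yield a connected component $L$ of $\widehat L = \bigcup_{n \geq 0} F^n(V)$ containing $V$, a minimal $\ell' \geq 1$ with $F^{\ell'}(L) \subset L$, a symbolic sequence $j_0\cdots j_{\ell'-1}\in\{-,+\}^{\ell'}$ controlling the orbit of $L$, and a strict contraction $K = K_{j_{\ell'-1}} \circ \cdots \circ K_{j_0}$ of $[0,1]$ with rate $\gamma^{-\ell'} < 1$ governing the $y$-coordinate of $F^{\ell'}|_L$.

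I next argue that $F^{\ell'}(L) \subsetneq L$ strictly. The map $F^{\ell'}$ is injective on $L$: on every fiber $I(y) \cap L$ it is strictly monotone as a composition of monotone unimodal branches, and it sends distinct fibers of $L$ to distinct fibers since $K$ is injective. By Brouwer's invariance of domain applied to this injective continuous map between 2-dimensional open sets, $F^{\ell'}(L)$ is open and $F^{\ell'} \colon L \to F^{\ell'}(L)$ is a homeomorphism. Moreover $\pi_2(F^{\ell'}(L)) = K(\pi_2(L))$ has length at most $\gamma^{-\ell'}|\pi_2(L)|$; since $L$ is $2$-dimensional, $\pi_2(L)$ has positive length, and therefore $K(\pi_2(L)) \subsetneq \pi_2(L)$ and $F^{\ell'}(L) \subsetneq L$. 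It follows that
\[
W := L \setminus \overline{F^{\ell'}(L)}
\]
is a nonempty open subset of $L$.

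To finish, I verify that $W$ is a wandering domain, contradicting the hypothesis. Condition (ii) is immediate from $W \subset L \subset \widehat L$. For condition (i), iterating the homeomorphism property gives by induction
\[
F^{\ell' k}(W) \subset F^{\ell' k}(L) \setminus \overline{F^{\ell'(k+1)}(L)}, \qquad k \geq 0,
\]
which lie in pairwise disjoint ``annuli'' of the decreasing nested sequence $\{F^{\ell' k}(L)\}_{k\geq 0}$; combined with the disjointness of $F^0(L), \ldots, F^{\ell'-1}(L)$ and the injectivity of each $F^i$ on $L$, this yields $F^n(W) \cap F^m(W) = \emptyset$ for all $n \neq m$. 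The main obstacle, overcome by the uniform expansion $|k'| > \gamma > 1$ of the Cantor map, is securing the strict $y$-contraction along $F^{\ell'}$, which both rules out $F^{\ell'}(L) = L$ and makes the fundamental annular domain $W$ nontrivial.
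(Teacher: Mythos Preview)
Your argument is correct but follows a genuinely different route from the paper. Both proofs begin identically: assume $V$ is an open set whose forward orbit misses $\li_c(F)$, and (after using the no-wandering-domain hypothesis to get a return) build the connected set $L$ with $F^{\ell'}(L)\subset L$ and associated contraction $K=K_{j_{\ell'-1}}\circ\cdots\circ K_{j_0}$ from Proposition~\ref{nonwanprop}. From this point the two arguments diverge. The paper reuses the shrinking-neighbourhood argument of Proposition~\ref{nonwanprop}: for each $(x,y)\in V$ one takes neighbourhoods $V_n\downarrow(x,y)$, applies the no-wandering-domain hypothesis to obtain returns $F^{k_n}(V_n)\cap V_n\neq\emptyset$ with $k_n=r_n\ell'$, and concludes from the contraction estimate that $y$ equals the fixed point $w$ of $K$; hence $V\subset I(w)$, contradicting openness. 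Your approach instead exploits the contraction once, globally: since $\pi_2(F^{\ell'}(L))=K(\pi_2(L))$ is strictly shorter than the nondegenerate interval $\pi_2(L)$, the inclusion $F^{\ell'}(L)\subsetneq L$ is strict, and the ``fundamental annulus'' $W=L\setminus\overline{F^{\ell'}(L)}$ (or a connected component thereof, to match the definition of \emph{domain}) is a genuine wandering domain. Your route is more constructive and avoids the pointwise limiting argument; the paper's route has the advantage of recycling verbatim the machinery already set up in Proposition~\ref{nonwanprop}. Both ultimately rest on the same engine, the uniform expansion $|k'|>\gamma>1$ of the Cantor map.
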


\begin{proof}
By contradiction, assume that there exists an open set $V\subset\text{Dom}(F)$ such that 
$F^m(V)\cap\li_c(F)=\emptyset$ for all $m\geq0$.
Construct, as in the proof of Proposition \ref{nonwanprop}, 
a connected set $L$ containing $V$ such that $F^\ell(L)\subset L$ for some $0<\ell\leq m$,
$F^i(L)\cap\li_c(F)=\emptyset$ and $F^i(L)\cap F^j(L)=\emptyset$
for all $i,j=0,\ldots,\ell-1$ with $i\neq j$.
Therefore, there exists a sequence $j_0j_1\cdots j_{\ell-1}\in\{-,+\}^\ell$
such that, for any $(a,b)\in L$ and $n\geq0$ 
it holds $F^{n\ell}(a,b)=(a_{n\ell},b_{(j_{\ell-1}\cdots j_1j_0)^n})$, where 
$b_{(j_{\ell-1}\cdots j_1j_0)^n}:=\left(K_{j_{\ell-1}}\circ\cdots\circ K_{j_0}\right)^n(b)$  
and $a_{n\ell}=f_{j_{\ell-1}}(b_{j_{\ell-2}\cdots j_0(j_{\ell-1}\cdots j_0)^{n-1}})
\circ\cdots\circ f_{j_0}(b_{(j_{\ell-1}\cdots j_0)^{n-1}})\circ\cdots\circ
f_{j_{\ell-1}}(b_{j_{\ell-2}\cdots j_0})\circ\cdots\circ 
f_{j_1}(b_{j_0})\circ f_{j_0}(b)(a)$. Since $K_{j_{\ell-1}}\circ\cdots\circ K_{j_0}$ 
is a contraction, there is $w\in\pi_2(\overline{L})$ such that
$(K_{j_{\ell-1}}\circ\cdots\circ K_{j_0})(w)=w$. For each $(x,y)\in V$, we can 
prove that $y=w$. Therefore $V\subset I(w)$, 
which contradicts the fact that $V$ is an open set. 
\end{proof}

For $n\geq1$, let
\[
\cc_n^F=\left\{(x,y)\in\text{Dom}(F): F^\ell(x,y)\in\li_c(F)\text{ for some }0\leq \ell\leq n-1\right\}.
\]

For each $y\in[0,1]$, let $H_n(y):\cc_n^F(y)\to\cc_n^G(\psi(y))$ 
be the map constructed in the proof of Theorem \ref{maintheorem}. 
Let $\widetilde{H_n}(y):I(y)\to I(\psi(y))$ be the homeomorphism such that:
\begin{enumerate}[$\circ$]
\item $\widetilde{H_n}(y)(\pm1,y)=(\pm1,\psi(y))$,
\item $\widetilde{H_n}(y)\restriction_{\cc_n^F(y)}=H_n(y)$, and
\item $\widetilde{H_n}(y)$ is linear in each connected component of $I(y)\backslash\cc_n^F(y)$.
\end{enumerate}
Now define $\widetilde{H_n}:\text{Dom}(F)\to\text{Dom}(G)$ by
$\widetilde{H_n}(x,y):=\widetilde{H_n}(y)(x,y)$. 

\begin{lemma}
For each $n\geq1$, $\widetilde{H_n}$ is continuous.  
\end{lemma}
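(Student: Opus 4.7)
The plan is to establish joint continuity of $\widetilde{H_n}$ at an arbitrary point $(x_0,y_0)\in\text{Dom}(F)$ by combining (a) the continuous dependence of the fiber sets $\cc_n^F(y)$ on $y$ furnished by Lemma \ref{lemmathe2}, (b) continuity of $\psi$ from Proposition \ref{prop1}, and (c) the fact that, by definition, $\widetilde{H_n}(\cdot,y)$ is an affine interpolation between the images of the (finitely many) points of $\cc_n^F(y)$. On each fiber $\widetilde{H_n}(\cdot,y)$ is already continuous by construction, so all that needs to be shown is continuity in $y$ in a joint sense.

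First, I would reinterpret the skeleton $\cc_n^F(y)$ via the continuous curves from Lemma \ref{lemmathe2}. For each sequence $\alpha=j_1\cdots j_k$ with $0\le k\le n-1$ I have a continuous curve $\gamma_\alpha^F\colon[0,1]\to\cc(F)$, together with the constant curves $y\mapsto(\pm1,y)$ and $y\mapsto(0^\pm,y)$. These curves, taken together, parametrize all points of $\cc_n^F(y)$ in every fiber; the analogous curves $\gamma_\alpha^G$ parametrize $\cc_n^G(\psi(y))$. By the inductive construction in the proof of Theorem \ref{maintheorem}, $H_n(y)(\gamma_\alpha^F(y))=\gamma_\alpha^G(\psi(y))$ whenever the left-hand side lies in $\cc_n^F(y)$, so the graphs of $H_n(\cdot)$ assemble into a continuous family as $y$ varies.

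Next, I would fix $(x_0,y_0)$ and $\varepsilon>0$ and split into two cases. If $x_0$ lies in the interior of a component of $I(y_0)\setminus\cc_n^F(y_0)$, with bounding skeleton curves $\gamma_\alpha^F,\gamma_\beta^F$, then by continuity of these curves the same two curves bound a component of $I(y)\setminus\cc_n^F(y)$ for all $y$ close to $y_0$; on this neighborhood $\widetilde{H_n}(x,y)$ is the unique affine map in $x$ sending $\gamma_\alpha^F(y)\mapsto\gamma_\alpha^G(\psi(y))$ and $\gamma_\beta^F(y)\mapsto\gamma_\beta^G(\psi(y))$, which depends jointly continuously on $(x,y)$. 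If instead $x_0=\gamma_\alpha^F(y_0)$ for some skeleton curve, then $\widetilde{H_n}(x_0,y_0)=\gamma_\alpha^G(\psi(y_0))$; for nearby $(x,y)$ I squeeze $\widetilde{H_n}(x,y)$ between the images of the skeleton curves bounding the two adjacent components and conclude using the continuity of those curves and of $\psi$.

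The main obstacle is the possible \emph{collapse of the combinatorial structure}: a curve $\gamma_\alpha^F$ may coincide with the turning-point curve $(0^{j_1},\cdot)$ on a whole interval of parameters $y$, because the inverse branch in the definition of $\gamma_\alpha^F$ fails to exist there (see the two cases at the end of the proof of Lemma \ref{lemmathe2}). When this occurs the number of points in $\cc_n^F(y)$ jumps. The key observation to handle this is that by the inductive construction in Theorem \ref{maintheorem}, the existence of each preimage $x(y)^F_{j_1\cdots j_k}$ is controlled by the kneading data, so $\gamma_\alpha^F(y)$ collapses onto $(0^{j_1},y)$ exactly when $\gamma_\alpha^G(\psi(y))$ collapses onto $(0^{j_1},\psi(y))$. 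Consequently, a component of $I(y)\setminus\cc_n^F(y)$ merges with its neighbor at a parameter $y_0$ if and only if the corresponding components on the $G$-side merge at $\psi(y_0)$, and the affine interpolations on the two sides degenerate consistently. This compatibility is precisely what keeps $\widetilde{H_n}$ continuous through all such combinatorial transitions, completing the proof.
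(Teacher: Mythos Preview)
Your proposal is correct and follows essentially the same route as the paper's proof: both arguments parametrize $\cc_n^F$ by the finitely many continuous curves $\gamma^F_{j_1\cdots j_k}$ from Lemma~\ref{lemmathe2}, treat the off-skeleton case via the explicit affine interpolation between two bounding curves, and treat the on-skeleton case by squeezing between images of adjacent curves. The paper packages the on-skeleton case as a subsequence argument (passing to a sub-subsequence on which the bounding pair $(a,b)$ is fixed, then splitting on whether $\gamma_a^F(y)=\gamma_b^F(y)$), whereas you front-load the combinatorial compatibility by arguing that collapses on the $F$-side and the $G$-side occur simultaneously because they are governed by the common kneading data; these are two presentations of the same mechanism. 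One small point: your collapse discussion only names the extremal case where $\gamma_\alpha^F$ falls all the way onto $(0^{j_1},\cdot)$, but the construction in Lemma~\ref{lemmathe2} also allows $\gamma^F_{j_1\cdots j_n}$ to coincide with a shorter curve $\gamma^F_{j_1\cdots j_k}$ when $\ell_{j_1\cdots j_n}(w)=k<n$; the same kneading argument covers this, so it is worth saying so explicitly.
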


\begin{proof}
We start invoking the curves constructed in the proof of Lemma \ref{lemmathe2}: 
for each $j_1j_2\cdots j_\ell\in\{-,+\}^\ell$ with $0\leq \ell\leq n-1$, let
$\gamma^F_{j_1j_2\cdots j_\ell}:[0,1]\to\cc^F_n$
be the continuous curve appearing in the construction of $\cc_n^F$.
Note that $\cc_n^F$ is the union of finitely many such curves:
\begin{equation*}
\cc^F_n = 
\displaystyle\bigcup_{\substack{j_1j_2\cdots j_\ell\: \in\: \{-,+\}^\ell\\
                  0\:\leq\: \ell\:\leq\: n-1}}
        \gamma^F_{j_1j_2\cdots j_\ell}([0,1]). 
\end{equation*}
Take $(x,y)\in\text{Dom}(F)\backslash\cc_n^F$, and take 
$\delta>0$ such that  $B_\delta(x,y)\cap\cc_n^F=\emptyset$.
Let $x_1<x<x_2$ such that $(x_1,y),(x_2,y)\in\cc_n^F$ and
$\cc_n^F(y)\cap[(x_1,x_2),y]=\emptyset$ (if $(x_1,y)$ or $(x_2,y)$ does not
exist, we take $x_1=-1$ or $x_2=1$ accordingly).
Take curves $\gamma^F_{j_1j_2\cdots j_\ell}$ and 
$\gamma^F_{e_1e_2\cdots e_k}$ such that 
$\gamma^F_{j_1j_2\cdots j_\ell}(y)=(x_1,y)$ and  
$\gamma^F_{e_1e_2\cdots e_k}(y)=(x_2,y)$, for some $1\leq \ell,k\leq n-1$.
For any curve $\gamma^F_{t_1t_2\cdots t_i}$ with $1\leq i\leq n-1$, we have
$\gamma^F_{t_1t_2\cdots t_i}(y)\cap\left[(x_1,x_2),y\right]=\emptyset$.
See Figure \ref{Htil3}.

\begin{figure}[H]\centering
	\begin{tikzpicture}
\draw[color=black] (0,0) -- (3,0) ;
	\coordinate (a) at (0,0);
  \coordinate (b) at (-0.2,1);
	\coordinate (c) at (-0.2,-1);
	\draw[color=violet,thick] (c) to [bend right=10] (a) to [bend right=10] (b);
	\coordinate (A) at (0,0);
  \coordinate (B) at (-0.5,1.2);
	\coordinate (C) at (0.2,-1.3);
	\draw[color=blue,thick] (C) to [bend right=10] (A) to [bend right=10] (B);
	\coordinate (d) at (3,0);
  \coordinate (f) at (3.5,1.5);
	\coordinate (g) at (2.4,-1.2);
	\draw[color=red,thick] (g) to [bend right=10] (d) to [bend right=10] (f);
\fill[black] (1.5,0) circle (0.mm) node[below]{$(x,y)$};
\fill[black] (3,0) circle (0.mm) node[right]{$(x_2,y)$};
\fill[black] (0,0) circle (0.mm) node[left]{$(x_1,y)$};
\fill[black] (1.5,1.1) circle (0.mm) node[above]{\textcolor{violet}{$\gamma^F_{j_1j_2\cdots j_\ell}$}};
\fill[black] (1.5,1.7) circle (0.mm) node[above]{\textcolor{blue}{$\gamma^F_{t_1t_2\cdots t_i}$}};
\fill[black] (1.5,0.5) circle (0.mm) node[above]{\textcolor{red}{$\gamma^F_{e_1e_2\cdots e_k}$}};
\fill[black] (3,0) circle (0.7 mm);
\fill[black] (1.5,0) circle (0.7 mm);
\fill[black] (0,0) circle (0.7 mm);
\end{tikzpicture}
\caption{Behaviour near the interval $[(x_1,x_2),y]$ .}
\label{Htil3}
\end{figure}
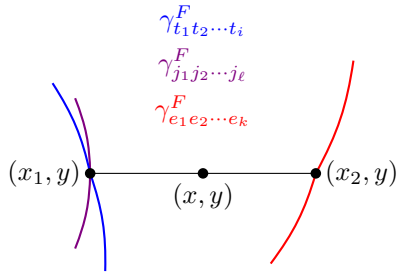
Choose $y_1<y<y_2$ and two continuous curves
$\alpha_1,\alpha_2:[y_1,y_2]\to\cc_n^F$ such that:
\begin{enumerate}[$\circ$]
\item $\alpha_i(y)=(x_i,y)$, $i=1,2$.
\item If $D(x,y)$ is the domain whose boundary is formed by the two curves $\alpha_1([y_1,y_2])$,
$\alpha_2([y_1,y_2])$ and the two intervals
$[(\alpha_1(y_1),\alpha_2(y_1)),y_1]$, $[(\alpha_1(y_2),\alpha_2(y_2)),y_2]$,
then $D(x,y)\cap\cc_n^F=\alpha_1([y_1,y_2])\cup
\alpha_2([y_1,y_2])$. 
\end{enumerate}
Then $\widetilde{H_n}(D(x,y))=:D(\widetilde{H_n}(x,y))$ is a domain such that
\[
D(\widetilde{H_n}(x,y))\cap\cc_n^G=\widetilde{\alpha_1}
[\psi(y_1),\psi(y_2)]\cup\widetilde{\alpha_2}[\psi(y_1),\psi(y_2)],
\]
where $\widetilde{\alpha_i}=\widetilde{H_n}(\alpha_i)$, see Figure \ref{Htil2}. 

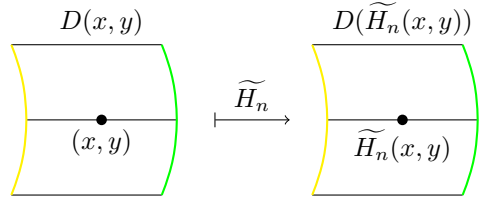
\begin{figure}[H]\centering
	\begin{tikzpicture}
\draw[color=black] (0,0) -- (2,0) ;
\draw[color=black] (-0.2,-1) -- (1.8,-1) ;
\draw[color=black] (-0.2,1) -- (1.8,1) ;
	\coordinate (a) at (0,0);
  \coordinate (b) at (-0.2,1);
	\coordinate (c) at (-0.2,-1);
	\draw[color=yellow,thick] (c) to [bend right=10] (a) to [bend right=10] (b);
	\coordinate (d) at (2,0);
  \coordinate (e) at (1.8,1);
	\coordinate (f) at (1.8,-1);
	\draw[color=green,thick] (f) to [bend right=10] (d) to [bend right=10] (e);
\fill[black] (1,0) circle (0.mm) node[below]{$(x,y)$};
\fill[black] (1,1) circle (0.mm) node[above]{$D(x,y)$};
\fill[black] (1,0) circle (0.7 mm);
\draw[|->] (2.5,0) -- (3.5,0) ;
\fill[black] (3,0) circle (0.mm) node[above]{$\widetilde{H_n}$};
\fill[black] (5,1) circle (0.mm) node[above]{$D(\widetilde{H_n}(x,y))$};
\draw[color=black] (4,0) -- (6,0) ;
\draw[color=black] (3.8,-1) -- (5.8,-1) ;
\draw[color=black] (3.8,1) -- (5.8,1) ;
	\coordinate (A) at (4,0);
  \coordinate (B) at (3.8,-1);
	\coordinate (C) at (3.8,1);
	\draw[color=yellow,thick] (C) to [bend left=10] (A) to [bend left=10] (B);
	\coordinate (D) at (6,0);
  \coordinate (E) at (5.8,1);
	\coordinate (F) at (5.8,-1);
	\draw[color=green,thick] (F) to [bend right=10] (D) to [bend right=10] (E);
\fill[black] (5,0) circle (0.mm) node[below]{$\widetilde{H_n}(x,y)$};
\fill[black] (5,0) circle (0.7 mm);
\end{tikzpicture}
\caption{Action of $\widetilde{H_n}$ on a neighborhood of $(x,y)$.}
\label{Htil2}
\end{figure}

By the definition of $\widetilde{H_n}$, for $(z,w)\in D(x,y)$ we have
$\widetilde{H_n}(z,w)=(\xi_{x,y}(z,w),\psi(w))$, where
\[
\xi_{x,y}(z,w)=\pi_1[\widetilde{\alpha_1}(\psi(w))]+
\dfrac{\pi_1[\widetilde{\alpha_2}(\psi(w))]-
\pi_1[\widetilde{\alpha_1}(\psi(w))]}{\pi_1[\alpha_2(w)]-\pi_1[\alpha_1(w)]} \left(z-\pi_1[\alpha_1(w)]\right).
\]
In particular, $\widetilde{H_n}$ is continuous at $(x,y)$.
Indeed, $\widetilde{H_n}$ is continuous on the union $D(x,y)\cup\partial D(x,y)$.

Suppose now that $(x,y)\in\cc_n^F$. Let $s\in\{-,+\}^\ell$
such that $(x,y)=(\gamma^F_s(y),y)$ for some $0\leq \ell\leq n-1$,
and consider $(x_j,y_j)\to(x,y)$. For each $j\geq 1$, there are curves 
$\gamma^F_{a_j},\gamma^F_{b_j}$ such that 
$(x_j,y_j)$ belongs to the interval $[[\gamma^F_{a_j}(y_j),\gamma^F_{b_j}(y_j)], y_j]$
and $[(\gamma^F_{a_j}(y_j),\gamma^F_{b_j}(y_j)), y_j]\cap\cc_n^F(y)=\emptyset$.
To prove that  
$\widetilde{H_n}(x_j,y_j)\to\widetilde{H_n}(x,y)$,
it is enough to prove that
any subsequence $(x_{j_k},y_{j_k})$
contains a sub-subsequence $(x_{j_{k_\ell}},y_{j_{k_\ell}})$ such that
$\widetilde{H_n}(x_{j_{k_\ell}},y_{j_{k_\ell}})\to\widetilde{H_n}(x,y)$.
Let $(x_{j_k},y_{j_k})$ be a subsequence of $(x_j,y_j)$.
There is a sub-subsequence with
$(x_{j_{k_\ell}},y_{j_{k_\ell}})\in[[\gamma^F_{a}(y_{j_{k_\ell}}),\gamma^F_{b}(y_{j_{k_\ell}})], 
y_{j_{k_\ell}}]$ for fixed $a,b$. Since 
$\gamma^F_{a}(y_{j_{k_\ell}})\leq x_{j_{k_\ell}}\leq\gamma^F_{b}(y_{j_{k_\ell}})$ for every $\ell$, 
when passing to the limit $\ell\to\infty$ we obtain $\gamma^F_{a}(y)\leq\gamma^F_s(y)\leq\gamma^F_{b}(y)$.
If $\gamma^F_{a}(y)=\gamma^F_{b}(y)$
then $\widetilde{H_n}(x_{j_{k_\ell}},y_{j_{k_\ell}})\to\widetilde{H_n}(x,y)$, since
\[
\widetilde{H_n}(\gamma^F_a(y_{j_{k_\ell}}),y_{j_{k_\ell}})\leq\widetilde{H_n}(x_{j_{k_\ell}},y_{j_{k_\ell}})
\leq\widetilde{H_n}(\gamma^F_b(y_{j_{k_\ell}}),y_{j_{k_\ell}})
\]
and 
\begin{align*}
&\lim_{\ell\to\infty}\widetilde{H_n}(\gamma^F_a(y_{j_{k_\ell}}),y_{j_{k_\ell}})=\widetilde{H_n}(\gamma^F_a(y),y)
=\widetilde{H_n}(\gamma^F_s(y),y)\\
&=\widetilde{H_n}(\gamma^F_b(y),y)=\lim_{l\to\infty}\widetilde{H_n}(\gamma^F_b(y_{j_{k_\ell}}),y_{j_{k_\ell}}).
\end{align*}
Assume now that $\gamma^F_{a}(y)\neq\gamma^F_{b}(y)$. Then either 
$\gamma^F_{s}(y)=\gamma^F_{a}(y)$ or $\gamma^F_{s}(y)=\gamma^F_{b}(y)$. 
In either case we can build a domain $D(x,y)$ such that $(x_{j_{k_\ell}},y_{j_{k_\ell}})\in D(x,y)$ 
for all $\ell\geq1$. Hence $\widetilde{H_n}(x_{j_{k_\ell}},y_{j_{k_\ell}})\to\widetilde{H_n}(x,y)$. 
It follows that $\widetilde{H_n}$ is continuous.
\end{proof}

Our next goal is to prove that $\{\widetilde{H_n}\}_{n\geq1}$ is a Cauchy sequence.

\begin{lemma}\label{SeqCau}
The sequence $\{\widetilde{H_n}\}_{n\geq1}$ is Cauchy.  
\end{lemma}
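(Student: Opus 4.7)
The plan is to show that the uniform mesh
\[
\alpha_n := \sup_{y\in[0,1]}\max\bigl\{|J|: J\text{ is a connected component of }I(\psi(y))\setminus\cc_n^G(\psi(y))\bigr\}
\]
tends to zero as $n\to\infty$. Once this is established, the Cauchy property is immediate: for $m\geq n$ we have $\widetilde{H_m}\restriction_{\cc_n^F(y)}=\widetilde{H_n}\restriction_{\cc_n^F(y)}=H_n(y)$ by the inductive construction from Theorem \ref{maintheorem}, and on any component $[p_1,p_2]$ of $I(y)\setminus\cc_n^F(y)$ both $\widetilde{H_n}(y)$ and $\widetilde{H_m}(y)$ map the component into $[H_n(y)(p_1,y),H_n(y)(p_2,y)]$, which has length at most $\alpha_n$. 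Since second coordinates agree, this gives $\|\widetilde{H_m}(x,y)-\widetilde{H_n}(x,y)\|\leq \alpha_n$ uniformly on $\text{Dom}(F)$.

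The first substantial step would be to prove \emph{fiberwise density}: $\overline{\cc^G(\psi(y))}=I(\psi(y))$ for every $y\in[0,1]$. Lemma \ref{lthmB} (using the no-wandering-domain assumption on $G$) yields only the 2D density $\overline{\cc(G)}=\text{Dom}(G)$, which a priori could approach $I(\psi(y_0))$ through points lying on nearby fibers but not on $\psi(y_0)$ itself. This is where the equicontinuity of $\{\phi_n^G\}$ is essential: given $(x_0,\psi(y_0))$ and $\eta>0$, I would pick $(z,w)\in\cc_M^G$ within $\eta$ of $(x_0,\psi(y_0))$ using 2D density, then apply equicontinuity (and continuity of $\psi$) to locate a point of $\cc_M^G(\psi(y_0))$ within $\eta$ of $z$, hence within $2\eta$ of $x_0$.

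Next I would upgrade fiberwise density to the uniform bound $\alpha_n\to 0$. Fix $\varepsilon>0$. For each $y_0$, a finite $\varepsilon/4$-net in $I(\psi(y_0))$ can be approximated by points of $\cc_{N(y_0)}^G(\psi(y_0))$ for some $N(y_0)$, so the max gap on this fiber is bounded by $\varepsilon$ from index $N(y_0)$ onward. A short auxiliary lemma on Hausdorff distance (if $d_\mathcal{H}(A,B)<\eta$ and $A,B$ share the common endpoints $\pm 1, 0^\pm$, then the max gap of $B$ is at most the max gap of $A$ plus $2\eta$), combined with equicontinuity and the continuity of $\psi$, extends this bound to a neighborhood $U_{y_0}\ni y_0$ and to all $n\geq N(y_0)$, since $\cc_n^G(\psi(y))\supseteq \cc_{N(y_0)}^G(\psi(y))$. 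Compactness of $[0,1]$ then gives a single $N^*$ working for all $y$.

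The main obstacle is the fiberwise density step. This is precisely where the hypotheses of Theorem \ref{maintheorem2} differ from those of Theorem \ref{TheA}: one no longer has Proposition \ref{nonwanprop} directly available, since ``no weakly attracting periodic points'' is not assumed, and the equicontinuity is exactly the ingredient allowing the transfer of preimages of the critical line from the ambient space to a prescribed fiber. Care is needed to balance the iteration count $M$ and the fiber deviation $|w-\psi(y_0)|$ with a single parameter $\eta$, but this follows from the 2D density statement providing both simultaneously. The remainder is routine interpolation control and a standard compactness packaging.
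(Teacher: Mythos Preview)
Your proposal is correct and follows essentially the same route as the paper: both arguments use the 2D density from Lemma~\ref{lthmB}, invoke the equicontinuity of $\{\phi_n^G\}$ to transfer this density onto each individual fiber, and then exploit the compatibility $\widetilde{H_m}\restriction_{\cc_n^F}=\widetilde{H_n}\restriction_{\cc_n^F}$ to trap $\widetilde{H_m}(x,y)$ and $\widetilde{H_n}(x,y)$ in a common short interval of $I(\psi(y))$. Your packaging via the explicit mesh quantity $\alpha_n$ is a bit cleaner than the paper's direct $\varepsilon$--$\delta$ argument (which picks auxiliary points $x_1,x_2$ at image-distance $2\varepsilon$ and chases them back through a $\delta$-net of $\cc_N^G$), but the substance is the same.
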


\begin{proof}
Let $\varepsilon>0$. By the equicontinuity of the family $\{\phi_n^G\}$, there is
$\delta>0$ such that if $|y_1-y_2|<\delta$ then 
$d_\mathcal{H}(\cc_n^G(y_1),\cc_n^G(y_2))<\varepsilon$ for all $n\geq1$.
By Lemma \ref{lthmB},
$\cc(G)$ is dense in $\text{Dom}(G)$, so there is $N>0$ large enough
such that $\{B_\delta(x,y);\; (x,y)\in\cc_N^G\}$ covers $\text{Dom}(G)$. 
Let $(x,y)\in\text{Dom}(F)$ and $m,n\geq N$. 
Since $\widetilde{H_n}$ is surjective, we can take $x_1<x<x_2$ such that
$|\widetilde{H_n}(x_1,y)-\widetilde{H_n}(x,y)|=
|\widetilde{H_n}(x_2,y)-\widetilde{H_n}(x,y)|=2\varepsilon$. Take
$(x_1',y'),(x_2',y'')\in\cc_N^F$ such that 
$\widetilde{H_n}(x_1,y)\in B_\delta(\widetilde{H_N}(x_1',y'))$
and $\widetilde{H_n}(x_2,y)\in B_\delta(\widetilde{H_N}(x_2',y''))$. 
Again by equicontinuity, there are
$(\widetilde{x_1},y),(\widetilde{x_2},y)\in\cc^F_N$ such that
$\widetilde{H_n}(x,y)$ belongs to the interval $[[\widetilde{H_N}(\widetilde{x_1},y),
\widetilde{H_N}(\widetilde{x_2},y)], \psi(y)]$
and $|\widetilde{H_N}(\widetilde{x_1},y) - 
\widetilde{H_N}(\widetilde{x_2},y)|<6\varepsilon$. Since 
$\widetilde{H_\ell}\restriction_{\cc_N^F}=\widetilde{H_N}\restriction_{\cc_N^F}$ 
$\forall\ell\geq N$,
we obtain that $\widetilde{H_n}(x,y),\widetilde{H_m}(x,y)$ both belong to the interval
$[[\widetilde{H_N}(\widetilde{x_1},y),\widetilde{H_N}(\widetilde{x_2},y)], \psi(y)]$. Hence
$$|\widetilde{H_n}(x,y) - \widetilde{H_m}(x,y)|<6\varepsilon,$$ and so
\[
\|H_n - H_m\|=\sup_{(x,y)\in\text{Dom}(F)}
|\widetilde{H_n}(x,y) - \widetilde{H_m}(x,y)|<6\varepsilon.
\]
This proves that $\{\widetilde{H_n}\}_{n\geq1}$ is a Cauchy sequence.
\end{proof}

Now we finish the proof of Theorem \ref{maintheorem2}. By the previous lemma,
$\widetilde{H}:\text{Dom}(F)\to\text{Dom}(G)$ defined by $\widetilde{H}:=\lim\widetilde{H_n}$ is 
continuous and surjective, so it remains to prove that $\widetilde{H}$ semiconjugates $F$ and $G$.

Since $\widetilde{H}$ agrees with $\widetilde{H_n}$ on 
$\cc^F_n$, we have $\widetilde{H}\circ F=G\circ\widetilde{H}$ on $\cc(F)\backslash\li_c(F)$.
Take $(x,y)\in\text{Dom}(F)\backslash\cc(F)$ and $(x_n,y_n)\in\cc(F)$
such that $\lim(x_n,y_n)=(x,y)$. 
Since $F$ is continuous on $\text{Dom}(F)\backslash\cc(F)$ and 
$\widetilde{H}(x,y)\in\text{Dom}(G)\backslash\cc(G)$,
\[
(\widetilde{H}\circ F)(x,y)=\lim(\widetilde{H}\circ F)(x_n,y_n)=
\lim (G\circ\widetilde{H})(x_n,y_n)=(G\circ\widetilde{H})(x,y).
\]
Take now $(x,y)\in\li_c(F)$, say $(x,y)=(0^-,y)$ (the case $(x,y)=(0^+,y)$ is similar). 
If $F(0^-,y)\notin\li_c(F)$, then for any sequence 
$(x_n,y_n)\in\cc(F)\backslash\li_c(F)$ with $\lim(x_n,y_n)=(0^-,y)$ we have $\lim F(x_n,y_n)=F(0^-,y)$. Hence
$(\widetilde{H}\circ F)(0^-,y)=(G\circ\widetilde{H})(0^-,y)$. Now assume that $F(0^-,y)\in\li_c(F)$, then
\[
(\widetilde{H}\circ F)(0^-,y)=(0^-,\psi\circ K_-^F(y))=(0^-, K_-^G\circ\psi(y))
=G(0^-,\psi(y))=(G\circ\widetilde{H})(0^-,y),
\]
where in the third passage we used that the kneading sequences 
$\mathcal{K}_F(\li_c)$ and $\mathcal{K}_G(\li_c)$ are the same.
Therefore $\widetilde{H}\circ F=G\circ\widetilde{H}$ on $\text{Dom}(F)$,
which concludes the proof.


\subsection{Proof of Theorem \ref{thesinger222}}

Let $f:I\to I$ be a $C^3$ interval map and let $x\in I$ 
such that $Df(x)\neq0$.

\medskip
\noindent
{\sc Schwarzian Derivative:} The {\em Schwarzian derivative}
$Sf(x)$ of $f$ at $x$ is
\[
Sf(x):= \tfrac{D^3f(x)}{Df(x)}-\tfrac{3}{2}\left(\tfrac{D^2f(x)}{Df(x)}\right)^2.
\]

We say that $f$ has {\it negative Schwarzian derivative} if $Sf(x)<0$
for all $x\in I$ except, possibly, the turning points.
At these points we define $Sf(x)=-\infty$.
It is easily seen from the definition that if $f,g:I\to I$ are $C^3$ and
$x\in I$ satisfies $Df(x)\neq0$ and $Dg(f(x))\neq0$ then
\[
S(g\circ f)(x)=Sg(f(x))\cdot(Df(x))^2 + Sf(x).
\] 
This implies the following: if $f,g$ have negative Schwarzian derivative,
then $g\circ f$ also has negative Schwarzian derivative.

\begin{lemma}[Minimum Principle]\label{minpri}
Let $I=[a,b]$ and $f:I\to I$ a $C^3$ map with negative Schwarzian derivative. 
If $Df(x)\neq0$ for all $x\in I$, then
\[
|Df(x)|>\min\{|Df(a)|, |Df(b)|\}, \ \forall x\in(a,b).
\]
\end{lemma}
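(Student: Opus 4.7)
The plan is to invoke the classical change of variables that turns the negative Schwarzian condition into convexity, namely $\phi := |Df|^{-1/2}$. Since $Df$ is continuous and nowhere zero on $I$, it has constant sign, so $|Df|$ is $C^3$ and $\phi$ is well-defined and $C^2$ on $I$. A direct (routine) differentiation yields the identity
\[
\phi''(x) \;=\; -\tfrac{1}{2}\,|Df(x)|^{-1/2}\,Sf(x),
\]
valid in both sign cases of $Df$: when $Df<0$ the same formula is obtained by carrying out the computation with $-Df$ in place of $Df$ and using that the Schwarzian is unchanged.

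Because $Sf(x)<0$ for every $x\in I$ (the turning-point exception in the definition is vacuous here, since $Df$ does not vanish on $I$), the identity above gives $\phi''>0$ on $[a,b]$, so $\phi$ is strictly convex. Consequently, for every $x\in(a,b)$, writing $x=ta+(1-t)b$ with $t\in(0,1)$, strict convexity yields
\[
\phi(x)\;<\;t\,\phi(a)+(1-t)\,\phi(b)\;\leq\;\max\{\phi(a),\phi(b)\}.
\]

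Since the map $s\mapsto s^{-1/2}$ is strictly decreasing on $(0,\infty)$, substituting back $\phi=|Df|^{-1/2}$ the preceding inequality translates into $|Df(x)|^{1/2}>\min\{|Df(a)|^{1/2},|Df(b)|^{1/2}\}$, and squaring gives the desired minimum principle. The only nontrivial point in the whole argument is verifying the second-derivative formula for $\phi$, which is a short but slightly tedious calculation; everything else is elementary convex analysis.
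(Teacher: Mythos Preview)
Your argument is correct and is precisely the standard proof of the Minimum Principle; the paper itself does not prove the lemma but simply refers to \cite[Lemma~6.1]{dMvS}, where essentially this same convexity argument via $\phi=|Df|^{-1/2}$ is carried out. One harmless slip: since $f$ is only assumed $C^3$, the derivative $Df$ (and hence $|Df|$) is $C^2$, not $C^3$ as you wrote---but this is exactly the regularity you need for $\phi$ to be $C^2$, so the computation of $\phi''$ and the strict convexity conclusion go through unchanged.
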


\begin{proof}
See \cite[Lemma 6.1]{dMvS}.
\end{proof}

We call $c\in I$ a {\em critical point} of $f$ if $Df(c)=0$. The following theorem, due to Singer,
is a consequence of the Minimum Principle.

\begin{theoremm}[Singer]
If $f:I\to I$ is a $C^3$ map with negative Schwarzian
derivative then the immediate basin of any 
attracting periodic orbit contains either a critical 
point of $f$ or a boundary point of $I$.
\end{theoremm}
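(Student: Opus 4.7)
The plan is to reduce to an attracting fixed point of a high iterate and then apply the Minimum Principle (Lemma \ref{minpri}) to reach a contradiction when no critical point and no boundary point lies in the immediate basin.

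First I would set up the reduction. Let $\mathcal{O}(p) = \{p, f(p), \dots, f^{n-1}(p)\}$ be an attracting periodic orbit of period $n$, and let $U$ be the connected component of the immediate basin containing $p$; write $U = (a,b)$. Assume by contradiction that $U$ contains no critical point of $f$ and no boundary point of $I$, so $a, b \in \operatorname{int}(I)$ and $f^n$ is a diffeomorphism from $U$ onto $f^n(U) \subset U$. Because the immediate basin is forward invariant and $a, b$ are maximal endpoints beyond which points escape $U$ under iterates of $f^n$, the boundary $\{a,b\}$ must be mapped into itself by $f^n$; replacing $n$ by $2n$ if $f^n$ swaps $a$ and $b$, we may assume $f^n(a) = a$ and $f^n(b) = b$. (Note that passing to $f^{2n}$ is harmless: the immediate basin of $p$ for $f^{2n}$ is still $U$, and no critical point of $f$ lies in $U$ nor in any iterate of $U$ up to time $2n-1$ precisely when the hypothesis fails, which is what we want to contradict.)

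Next I would invoke the composition formula for the Schwarzian stated in the text: since $f$ has negative Schwarzian derivative and no critical point is encountered along the orbit of $U$ under $f, f^2, \dots, f^{n-1}$ (each such iterate of $U$ lies in the immediate basin of the corresponding point $f^k(p)$, hence also avoids critical points by symmetry of the hypothesis), the composition $g := f^n$ has negative Schwarzian derivative on $[a,b]$, with $Dg$ nowhere zero there.

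Then comes the Minimum Principle step. Since $a, b$ are fixed points of $g$ lying on the boundary of the basin of the attracting fixed point $p$ of $g$, neither $a$ nor $b$ can be attracting for $g$; consequently $|Dg(a)| \ge 1$ and $|Dg(b)| \ge 1$. Applying Lemma \ref{minpri} to $g$ on $[a,b]$ yields
\[
|Dg(p)| > \min\{|Dg(a)|,|Dg(b)|\} \ge 1,
\]
which contradicts the fact that $p$ is an attracting fixed point of $g$, so $|Dg(p)| \le 1$.

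The main subtlety I expect is justifying the boundary behavior $f^n(\{a,b\}) \subseteq \{a,b\}$ and the passage to $f^{2n}$ cleanly, so that the chain rule for Schwarzian really applies on the whole closed interval $[a,b]$ without encountering a critical point. Once those bookkeeping steps are in place, the Minimum Principle delivers the contradiction in one line, completing the proof.
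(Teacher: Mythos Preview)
Your argument is correct and is the standard proof of Singer's theorem via the Minimum Principle. Note, however, that the paper does not actually prove this statement: its ``proof'' is simply a reference to \cite[Thm~6.1]{dMvS}. That said, your outline is precisely the argument the paper itself reproduces in its proof of Theorem~\ref{thesinger222} (the toy-model analogue): restrict to the component of the immediate basin containing the periodic point, pass to an iterate so the endpoints are fixed, use that the endpoints are non-attracting to get derivative at least one there, and apply the Minimum Principle to contradict attraction at $p$. So your approach matches both the cited source and the paper's own adaptation of it.
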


\begin{proof}
See \cite[Thm 6.1]{dMvS}.
\end{proof}

Let us restate Theorem \ref{thesinger222},
which is a version of Singer's theorem for the toy models.

\begin{teoBBB}
Let $F(x,y)=(f(y)(x), K^F_{{\rm sign}(x)}(y))$ be a toy model.
If each interval map $f(y)$ has negative Schwarzian 
derivative, then the closure of the immediate 
basin of any strongly attracting periodic orbit contains either a 
point of the critical line or a point of 
$\Lambda:=\left\{-1,1\right\}\times[0,1]$.
\end{teoBBB}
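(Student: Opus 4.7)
The plan is to reduce the two-dimensional statement to the classical Singer theorem applied along a horizontal fiber. Let $(p,q)$ be a strongly attracting periodic point of period $n$. If $\mathscr{O}(p,q)\cap\li_c(F)\neq\emptyset$ we are done, so assume the orbit avoids the critical line; then the immediate basin $\mathscr{B}_0(p,q)$ is defined, and I let $B$ denote its connected component containing $(p,q)$ and $j_1\cdots j_n\in\{-,+\}^n$ the address sequence along the orbit. Writing $K=K_{j_n}\circ\cdots\circ K_{j_1}$, the point $q$ is a fixed point of this contraction, and the restriction of $F^n$ to the fiber $I(q)$ is governed in the first coordinate by
\[
\phi(x) \;:=\; f_{j_n}(q_{j_{n-1}\cdots j_1})\circ\cdots\circ f_{j_2}(q_{j_1})\circ f_{j_1}(q)(x).
\]
Let $\tilde{I}\subset I(q)$ be the maximal closed subinterval containing $p$ on which this composition is defined, i.e.\ on which $F^k(\cdot,q)$ has address $j_{k+1}$ for every $0\le k\le n-1$. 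Each branch $f_{j_i}(\cdot)$ is a monotone $C^3$ map with negative Schwarzian derivative and non-vanishing derivative in its interior, so the Schwarzian composition formula implies that $\phi$ has negative Schwarzian derivative on $\mathrm{int}(\tilde{I})$ and no interior critical point. The one-dimensional slice $U:=B\cap I(q)$ is an open interval around $p$ contained in $\tilde{I}$; since $F^n(B)\subset B$, it satisfies $\phi(U)\subset U$ with $\phi^k|_U\to p$.

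Now I argue by contradiction: assume $\overline{\mathscr{B}_0(p,q)}$ meets neither $\li_c(F)$ nor $\Lambda$. I will show this forces $\overline{U}\subset\mathrm{int}(\tilde{I})$ and then invoke the Minimum Principle (Lemma~\ref{minpri}) to contradict the attractivity of $p$, which is exactly the scheme of Singer's theorem. Indeed, every endpoint $u\in\partial\tilde{I}$ is, by definition of $\tilde{I}$, a point whose orbit first reaches $\Lambda\cup\li_c(F)$ at some smallest $k\in\{0,\dots,n-1\}$; because the orbit of $(p,q)$ avoids $\li_c(F)$, the map $F$ is continuous at each $F^i(u,q)$ for $i<k$, and continuity together with $F^i(B)$ lying in a single component of $\mathscr{B}_0(p,q)$ gives $F^i(u,q)\in\overline{\mathscr{B}_0(p,q)}$ for all $i\le k$. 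If $u$ were in $\overline{U}$, the point $F^k(u,q)$ would be the sought-after witness, contradicting our assumption. Hence $\overline{U}$ is compactly contained in $\mathrm{int}(\tilde{I})$, and $\phi$ is a $C^3$ diffeomorphism with negative Schwarzian derivative on a neighborhood of $\overline{U}$. Passing to $\phi^2$ if $\phi$ reverses orientation, the endpoints of $\overline{U}$ are fixed points of $\phi$ (or $\phi^2$) with multiplier of modulus $\ge 1$; the Minimum Principle applied on $\overline{U}$ then forces $|D\phi(x)|>1$ for every $x\in U$, contradicting $|D\phi(p)|<1$. Therefore the initial assumption fails, and $\overline{\mathscr{B}_0(p,q)}$ must contain a point of $\li_c(F)\cup\Lambda$.

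The main obstacle I anticipate is the lifting step, not the one-dimensional Singer argument itself: one must carefully handle the discontinuity of $F$ across $\li_c(F)$ when transporting an endpoint $u\in\partial\tilde{I}$ forward through the iterates $F^0(u,q),\ldots,F^k(u,q)$, justifying that these stay in the closure of the immediate basin even though the sign of the first coordinate can change from one iterate of $B$ to the next. A secondary technical point is checking that $\tilde{I}$ is genuinely a single interval (which follows from the monotonicity of each $f_{j_i}$ and from induction on the number of composed branches) so that $\overline{U}$ has exactly two endpoints on which to apply the Minimum Principle.
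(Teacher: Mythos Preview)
Your proposal is correct and follows essentially the same route as the paper: restrict to the fiber $I(q)$, show that the endpoints of the basin slice $U$ are fixed by $\phi$ (or $\phi^2$) with multiplier $\geq 1$ --- the paper obtains this via a one-line two-dimensional maximality argument, which is exactly the content of your ``lifting step'' --- and then apply the Minimum Principle to reach a contradiction. One small slip: since ``strongly attracting'' is defined topologically here (the basin contains an open set), you only know $|D\phi(p)|\le 1$, not $<1$; the paper accordingly phrases the contradiction via the length of $\phi([a,b])$ rather than the multiplier at $p$, but either formulation works.
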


Before proving this theorem, let us introduce some notation. Write the derivative 
of $F(x,y)=(f(y)(x), K_{sign(x)}(y))$ by
$$ 
DF=\left(
\begin{array}{cc}
f_x & f_y \\
0 & K_y \\
\end{array}
\right). 
$$
For $(x,y)\in\text{Dom}(F)$, let $j_{(x,y)}=(j_1\cdots j_m\cdots)\in\{-,+\}^\N$
such that
\[
F^m(x,y)=\left(f_{j_m}(y_{j_{m-1}\cdots j_1})\circ\cdots\circ
f_{j_2}(y_{j_1})\circ f_{j_1}(y)(x),y_{j_m\cdots j_1}\right), \ \forall m\geq0.
\]
The derivative of $F^m$ at $(x,y)$ is
$$ 
DF^m(x,y)=\left(
\begin{array}{cc}
A^m(x,y) & B^m(x,y) \\
0 & D^m(x,y) \\
\end{array}
\right), 
$$
where
\[
\begin{array}{rcl}
A^m(x,y)&=&\displaystyle\prod_{\ell=0}^{m-1}f_x(F^\ell(x,y)), \\
D^m(x,y)&=&\displaystyle\prod_{\ell=0}^{m-1}K_y(F^\ell(x,y)),\\
B^m(x,y)&=&\displaystyle\sum_{j=0}^{m-1}\left(f_y(F^j(x,y))
\prod_{\ell=0}^{j-1}K_y(F^\ell(x,y))\prod_{\ell=j+1}^{m-1}f_x(F^\ell(x,y))\right).
\end{array}
\]
\newline

\begin{proof}[Proof of Theorem \ref{thesinger222}.]
Let $(p,q)\in\text{Dom}(F)$ be a strongly attracting periodic point of period $m$,
and let $B$ be the closure of its immediate basin. 
By contradiction, we assume that $B$ does not contain points of 
$\Lambda$ neither points of the critical line.
Let $B_0$ be the interior of the connected component of $B$ that contains $(p,q)$. Then
$F^m(B_0)\subset B_0$. Let $[(a,b), q]$ be the connected
component of $B_0\cap I(q)$ containing $(p,q)$, see Figure \ref{B000}.
\begin{figure}[H]\centering
\begin{tikzpicture}
\draw (0.5,0) .. controls (0,-1) and (-1,0) .. (0.5,1);
\draw (0.5,0) .. controls (1,1) and (1.5,1) .. (2,0);
\draw (2,0) .. controls (2.5,-1) and (4.5,-1) .. (4,1);
\draw (4,1) .. controls (3.7,2) and (3,1.5) .. (2,1.5);
\draw (0.5,1) .. controls (1,1.3) and (1,1.5) .. (2,1.5);
\draw[-] (-0.5,0.1) -- (4.5,0.1);
\draw[->,>=triangle 45] (-1,-1) -- (5,-1) node [right] {$x$};
\draw[->,>=triangle 45] (-1,-1) -- (-1,2) node [above] {$y$}; 
\draw[dashed] (4.05,0.1) -- (4.05,-1);
\draw[dashed] (1.945,0.1) -- (1.945,-1);
\fill[black] (3,0.1) circle (0.7 mm) node[above]{$(p,q)$};
\fill[black] (1.945,-1) circle (0.mm) node[below]{$a$};
\fill[black] (4.05,-1) circle (0.mm) node[below]{$b$};
\fill[black] (4.5,0.1) circle (0.mm) node[right]{$I(q)$};
\fill[black] (2.3,1.9) circle (0.mm) node[right]{$B_0$};
\end{tikzpicture}
\caption{Construction of the interval $[(a,b),q]$.}
\label{B000}
\end{figure}
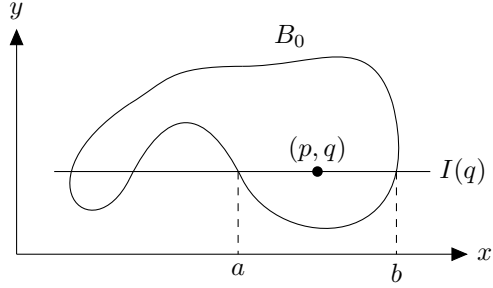 

\noindent
Since $F^m(p,q)=(p,q)$ and $F$ preserves fibers, 
$F^m([(a,b), q])\subset\left([(a,b), q]\right)$.
Let $j=(j_1\cdots j_m)\in\{-,+\}^m$ be the sequence that gives the iterations of $F$ up to order $m$ inside
$[(a,b),q]$, i.e. $F^m(x,q)=\left(g(x), q \right)$ for $x\in(a,b)$, where
$g=f_{j_m}(q_{j_{m-1}\cdots j_1})\circ\cdots\circ f_{j_2}(q_{j_1})\circ f_{j_1}(q)$.
By assumption, we also have $A^m(x,q)\neq 0$.

\medskip
\noindent
{\sc Claim:} $g(\{a,b\})\subset\{a,b\}$. 

\begin{proof}[Proof of the claim.]
Suppose that $g(a)\in(a,b)$ (the case $g(b)\in(a,b)$ is treated analogously).
Since $F^m$ is continuous at $(a,q)$, there is a neighborhood $V\ni(a,q)$
such that $F^m(V)\subset B_0$. This contradicts the assumption that $B_0$ is the
connected component of the immediate basin containing $(p,q)$. The claim is proved.
\end{proof}

We can assume that $A^m(x,q)>0$ for all $x\in(a,b)$ 
and that $g(\alpha)=\alpha$ for $\alpha\in\{a,b\}$
(if this is not the case, take $A^{2m}(x,q)$ instead of $A^m(x,q)$).
Since $(a,q),(b,q)$ are not attracting periodic points, we have
$A^m(\alpha, q)\geq 1$ for $\alpha\in\{a,b\}$. Since $g$ is the composition of
maps with negative Schwarzian derivative, it also has negative Schwarzian 
derivative. By the Minimum Principle, $A^m(x,q)>1$ 
for all $(x,q)\in[(a,b),q]$, which contradicts the equality $F^m([(a,b), q])=[(a,b), q]$.
\end{proof}

We finish this paper with a question. One of the assumptions for Theorem \ref{TheA}
is the non-existence of wandering intervals. For unimodal maps,
negative Schwarzian derivative guarantees the absence of wandering intervals.
This is a famous theorem of Guckenheimer.

\begin{theoremm}[Guckenheimer]
Let $f:I\to I$ be a $C^3$ unimodal map with
negative Schwarzian derivative. If $f''(c)\neq0$ at the unique critical point $c$ of $f$,
then $f$ has no wandering intervals.
\end{theoremm}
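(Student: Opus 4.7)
My plan is by contradiction: suppose $f$ admits a wandering interval $J\subset I$. Then the iterates $J_n:=f^n(J)$ are pairwise disjoint, never meet $c$, and are not eventually periodic. From $\sum_{n\geq 0}|J_n|\leq |I|$ I immediately get $|J_n|\to 0$, and the set $\omega(J):=\bigcap_N\overline{\bigcup_{n\geq N}J_n}$ is a nonempty closed forward-invariant subset of $I$. The whole strategy is to construct, around each $J_n$, an enlarged interval $T_n\supset J_n$ with a uniformly controlled ratio $|T_n|/|J_n|$; once this is achieved, eventual disjointness of the $T_n$ together with $|T_n|\to 0$ forces a contradiction with the total length of $I$.

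The main tool is the cross-ratio / Koebe principle for maps with negative Schwarzian derivative, as developed in \cite{dMvS} (Chapter~IV): on any interval $T$ on which an iterate $f^k$ is a diffeomorphism, and for any subinterval $S\subset T$ with definite Koebe space $|T\setminus S|/|S|\geq\tau>0$, the distortion of $f^k$ on $S$ is controlled by a constant depending only on $\tau$. I would first treat the easier case $c\notin\omega(J)$: every late $J_n$ then lies in a fixed compact set disjoint from $c$, and for each $n$ there is a maximal branch on which $f^{N-n}$ is monotone, carrying a neighbourhood $T_n\supset J_n$ onto a fixed macroscopic $T_N$. The Koebe principle yields $|T_n|\asymp |J_n|$ with a uniform constant, and the resulting enlargements close the argument.

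The harder case is $c\in\omega(J)$, where the pullbacks of $J_n$ must traverse a neighbourhood of the critical point infinitely often. Here the hypothesis $f''(c)\neq 0$ is decisive: near $c$ we have $|f(x)-f(c)|\asymp (x-c)^2$, so the pullback of an interval $W\ni f(c)$ through $c$ is a definite fraction of an interval centred at $c$, with Koebe space inherited from $W$ up to a bounded factor. Alternating this quadratic control at each critical return with Koebe distortion control on the monotone windows between returns, I would inductively construct the enlargements $T_n\supset J_n$ with uniform Koebe space and uniformly bounded distortion.

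The principal obstacle is the bookkeeping of Koebe space across infinitely many critical returns. Without $f''(c)\neq 0$, a single passage through a flat critical point could annihilate all accumulated Koebe space and break the induction. Non-degeneracy converts each critical return into a clean power-law estimate, which is precisely what allows the Koebe constants to survive. Carrying out the inductive choice of the correct monotone pullback branches at each step, and verifying that the cross-ratio inequality compounds under the chain rule for Schwarzian derivatives, is where the technical heart of the argument lies.
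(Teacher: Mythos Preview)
The paper does not actually prove Guckenheimer's theorem: its entire proof is the single line ``See \cite[Thm 6.3]{dMvS}.'' So there is nothing to compare your argument against in the paper itself; your sketch is an attempt to reproduce the standard proof from de~Melo--van~Strien, and the ingredients you list (cross-ratio expansion under negative Schwarzian, the Koebe principle on monotone branches, the non-flatness $f''(c)\neq 0$ to control critical passages, the case split $c\notin\omega(J)$ versus $c\in\omega(J)$) are indeed the right ones.

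That said, the contradiction you announce is misstated. You write that ``eventual disjointness of the $T_n$ together with $|T_n|\to 0$ forces a contradiction with the total length of $I$,'' but infinitely many disjoint intervals whose lengths tend to zero fit perfectly well inside a bounded interval; there is no contradiction there. In the actual argument the contradiction runs the other way: the Koebe/cross-ratio control, combined with the quadratic behaviour at $c$, forces a \emph{lower} bound on $|J_{n_k}|$ along a subsequence of close returns (or, in other presentations, forces the orbit of $J$ to accumulate on an attracting periodic orbit), and it is this lower bound that is incompatible with the disjointness of the $J_n$. Your inductive machinery is aimed at producing uniform Koebe space for the pullbacks, which is correct, but you should be explicit that the payoff is a definite lower bound on infinitely many $|J_n|$, not that $|T_n|\to 0$.
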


\begin{proof}
See \cite[Thm 6.3]{dMvS}.
\end{proof}

We do not know if such implication also holds for toy models, so we ask the following.

\medskip
\noindent
{\bf Question:} Let $F(x,y)=(f(y)(x), K^F_{{\rm sign}(x)}(y))$ be a toy model.
If each unimodal map $f(y)$ has negative Schwarzian 
derivative, can $F$ have wandering intervals?



\bibliography{Kne_Seq}
\bibliographystyle{alpha}

\end{document}